\documentclass[11pt]{article} 
\usepackage{multicol} 
\usepackage{newlfont}
\usepackage{amsmath,amssymb}
\usepackage{tikz}

\topmargin-.5truein
\textwidth6.5truein
\textheight8.5truein
\oddsidemargin0pt
\def\rit{\mathbb{R}}
\def\zit{\mathbb{Z}}   
\def\nit{\mathbb{N}}

\def\qit{\mathbb{Q}} 
\def\cit{\mathbb{C}}

\newcommand{\pf}{{\em Proof.~}}
\newcommand{\qed}{\hfill~~\mbox{$\Box$}}

\newenvironment{proof}{\smallskip \noindent \pf}{\qed \bigskip}

\newtheorem{theorem}{Theorem}[section]
\newtheorem{proposition}[theorem]{Proposition}
\newtheorem{definition}[theorem]{Definition}
\newtheorem{lemma}[theorem]{Lemma}
\newtheorem{corollary}[theorem]{Corollary}

\newtheorem{remark}[theorem]{Remark}
\newtheorem{example}[theorem]{Example}

\DeclareMathOperator{\im}{im}
\DeclareMathOperator{\card}{Card}

\DeclareMathOperator{\gr}{gr}
\DeclareMathOperator{\Cone}{Cone}
\DeclareMathOperator{\supp}{supp}
\DeclareMathOperator{\vol}{vol}

\DeclareMathOperator{\Spec}{Spec}

\DeclareMathOperator{\orb}{orb}

\DeclareMathOperator{\conv}{conv}

\begin{document}

\title{\bf A note on the toric Newton spectrum of a polynomial}
\author{\sc Antoine Douai \\%\thanks{Partially supported by }\\
Universit\'e C\^ote d'Azur, CNRS, LJAD, FRANCE \\
Email address: antoine.douai@univ-cotedazur.fr}

\maketitle

\begin{abstract} 
We define the toric Newton spectrum of a polynomial and we give some applications in singularity theory, combinatorics and mirror symmetry. 
\end{abstract}

\section{Introduction}

Let $f$ be a convenient and Newton nondegenerate polynomial in the sense of Kouchnirenko \cite{K}, defined on $\cit^n$ (we recall the basic definitions in Section \ref{sec:NewtonForms}). 
In this text, we define and we study the {\em toric Newton spectrum} of $f$, that is 
the Hilbert-Poincar\'e series $P_{\gr^{\mathcal{N}}(\frac{\mathcal{B}}{\mathcal{L}})}(z)$ of the ring 
$\mathcal{B}/\mathcal{L}$, graded by the Newton filtration $\mathcal{N}$ where 
$\mathcal{B}$ denotes the polynomial ring $\cit [u_1, \cdots , u_n ]$ and $\mathcal{L}$ denotes the ideal generated by the partial derivatives $u_1\frac{\partial f}{\partial u_1 }, \cdots , u_n \frac{\partial f}{\partial u_n}$ of $f$.
Because the ideal $\mathcal{L}$ fits very well with the Newton filtration, this toric Newton spectrum can be easily computed using a Koszul complex provided by \cite{K}, see Theorem \ref{theo:ToricSpectrumSeries}. It turns out that in some cases (mirror symmetry, combinatorics), this toric spectrum is the natural spectrum to consider.

Let us explain more in details our motivations. First, we show that the spectrum at infinity (defined in \cite{Sab} as a global version of the classical singularity spectrum) of a convenient and nondegenerate polynomial can be computed from its toric Newton spectrum: we get in particular in Proposition \ref{prop:SpectrumToricSpectrum} a combinatorial description that matches with a prior result of Y. Matsui and K. Takeuchi \cite[Theorem 5.16]{MSTakeu} (it should be emphasized that our proof is purely algebraic).  Notice that, unlike the spectrum at infinity, the toric Newton spectrum is not symmetric about $n/2$ (in other words, does not satisfy Poincar\'e duality), but this is logical as we basically deal with a non complete situation.

Second, and this is a manifestation of mirror symmetry, we show that this toric Newton spectrum is related with the orbifold cohomology of a stack naturally produced by the Newton polytope $P$ of $f$. More precisely,
let $\Sigma$ be the fan in $\rit^n$ obtained by taking the cones over the proper faces of $P$ not containing the origin 
and let $X_{\Sigma}$ be the toric variety associated with the fan $\Sigma$.
We denote the set of the vertices of $P$ different from the origin by $\mathcal{V}(P)$.
Following \cite{BCS}, we will call the triple $\mathbf{\Sigma} =(\zit^n, \Sigma ,\mathcal{V}(P))$ the {\em stacky fan} of $P$.
We get in Corollary \ref{coro:ToricSpectraOrbifoldCohomology} the formula 
$$P_{\gr^{\mathcal{N}}(\frac{\mathcal{B}}{\mathcal{L}})}(z)=\sum_{\alpha\in\qit} \dim_{\qit} H^{2\alpha }_{\orb} (\mathcal{X}, \qit ) z^{\alpha}$$
where $\mathcal{X}$ is the stack associated with $\mathbf{\Sigma}$ by \cite[Proposition 4.7]{BCS}. One may
expect an isomorphism of $\qit$-graded {\em rings} 
$$H_{\orb}^{2*}(\mathcal{X}, \qit )\stackrel{\cong}{\longrightarrow} \gr^{\mathcal{N}} (\frac{\mathcal{B}}{\mathcal{L}})$$
(a typical example of what could be a mirror isomorphism) and the previous equality gives a clue at the linear level. The correspondence of the products is much more involved: in the complete case (basically if $P$ contains the origin as an interior point and if $f$ is a Laurent polynomial), 
this is achieved in \cite[Theorem 1.1]{BCS} (with a little help from \cite{K}!)
if $f(u)=\sum_{b\in\mathcal{V}(P)} u^b$. See Section \ref{sec:MirrorSymmetry} for a discussion about the subject.
The interest of such an isomorphism is clear, as the product on $\gr^{\mathcal{N}} (\frac{\mathcal{B}}{\mathcal{L}})$ is in principle easy to compute. 

Third, and this is the combinatorial point of view, the toric Newton spectrum counts weighted lattice points in the Newton polytope $P$ of $f$: the toric Newton spectrum determines the $\delta$-vector of $P$, hence its Ehrhart polynomial. This is emphasized in Section \ref{sec:Combinatoire}.

Last, it should be noticed that the previous results have a straightforward {\em local} version, in which case $f$ is a power series with an isolated critical point at the origin: in particular the local toric spectrum has also a combinatorial interpretation. This is discussed in Section \ref{sec:LocalCase}.\\

\noindent {\em Conventions.} Let $n$ be a positive integer. In this text, $\mathcal{B}$ will denote the polynomial ring $\cit [u_1 ,\cdots , u_n ]$ 
and we will write $u^m:= u_1^{m_1}\cdots u_n^{m_n}$ if $m=(m_1 ,\cdots , m_n )\in\nit^n$. \\

\noindent {\bf Acknowledgement.} I thank Prof. K. Takeuchi for pointing to me the fact that the description of the spectrum at infinity obtained from Proposition \ref{prop:ToricSpectrumBox} and Proposition \ref{prop:SpectrumToricSpectrum} can be found (with a very different proof) in \cite{MSTakeu}.

\section{Kouchnirenko's setting}
\label{sec:NewtonForms}

In this section, we set the framework and the notations. We follow \cite{K}.
The {\em support} of $g=\sum_{m\in \nit^n}a_m u^m \in\mathcal{B}$ is
$\supp (g)=\{m\in\nit^n,\ a_m \neq 0\}$ 
and the {\em Newton polytope} of $g$ is the convex hull of $\{0\}\cup \supp (g)$ in $\rit^n_+$. The {\em Newton boundary} of a Newton polytope is the union of its closed faces that do not contain the origin. The polynomial $f$ is {\em convenient} if, for each $i=1, \cdots , n$, 
there exists an integer $n_i \geq 1$
such that the monomial $u_i ^{n_i}$ appears in $f$ with a non-zero coefficient.
The polynomial $f$ is {\em Newton nondegenerate} (for short in this text: nondegenerate) 
if, for each closed face $\Delta$ of the Newton boundary, the system
\begin{equation}\nonumber
(u_1 \frac{\partial f}{\partial u_{1}})_{\Delta}=\cdots =(u_n \frac{\partial f}{\partial u_{n}})_{\Delta}=0
\end{equation}
has no solutions on $(\cit^* )^n$ (we define here $g_{\Delta}=\sum_{m\in \nit^n \cap \Delta} b_m u^m $ if $g=\sum_{m\in \nit^n}b_m u^m \in \mathcal{B}$).

Let $f\in\mathcal{B}$ and let $P$ be its Newton polytope. 
If $F$ is a facet (face of dimension $n-1$) of the Newton boundary of $P$, let $u_F \in \qit^n$ be such that
\begin{equation}\label{eq:PresentationFacet}
F=P\cap \{n\in \rit^n ,\ \langle u_F ,n\rangle = 1 \}.
\end{equation}
The {\em Newton} function $\nu : \nit^n \rightarrow \qit$ of $P$ is defined by $\nu (a):=\max_{F}\langle u_F ,a\rangle$, where the maximum is taken over the facets of the Newton boundary of $P$.
We have $\nu (a+b )\leq \nu (a) +\nu (b)$
with equality if and only if $a$ and $b$ belong to the same cone (by a cone, we mean a cone spanned by the faces of the Newton boundary of $P$). 
For $\alpha\in\qit$, let
\begin{equation}\nonumber
\mathcal{B}_{\alpha}=\{g\in \mathcal{B},\ \supp (g) \in \nu^{-1}(]-\infty ; \alpha ]) \}.
\end{equation} 
We get an increasing filtration $\mathcal{N}_{\bullet}$ of the ring $\mathcal{B}$, indexed by $\qit$, by setting $\mathcal{N}_{\alpha}\mathcal{B}:=\mathcal{B}_{\alpha}$: this is the {\em Newton filtration} of $\mathcal{B}$. 
We put $\mathcal{B}_{<\alpha}=\cup_{\beta <\alpha}\mathcal{B}_{\beta}$, 
$B_{\alpha}=\frac{\mathcal{B}_{\alpha}}{\mathcal{B}_{<\alpha}}$ and $B=\oplus_{\alpha}\mathcal{B}_{\alpha}/\mathcal{B}_{<\alpha}$.
The product in $B$ is described as follows: for $u^m \in \mathcal{B}$, let
$\delta_m =u^m +\mathcal{B}_{<\nu (m)}\in \mathcal{B}_{\nu (m)}/ \mathcal{B}_{<\nu (m)}=B_{\nu (m)}$;
then,
\begin{equation} \label{eq:ProduitB}
\delta_{m_1} . \delta_{m_2} =\left\{ \begin{array}{ll}
\delta_{m_1 +m_2} & \mbox{if $m_1$ and $m_2 $ belong to the same cone,}\\
0 & \mbox{otherwise}.
\end{array}
\right. 
\end{equation}
We will denote by $P_B (z) =\sum_{\alpha\in\qit}\dim B_{\alpha} z^{\alpha}$ the Hilbert-Poincar\'e series of the graded ring $B$. More generally, if $C=\oplus_{\alpha} C_{\alpha}$ is a graded ring, we will put 
$P_C (z) :=\sum_{\alpha\in\qit}\dim C_{\alpha} z^{\alpha}$.

\section{The toric Newton spectrum of a convenient and nondegenerate polynomial}

Let $f$ be  a convenient and nondegenerate polynomial on $\cit^n$. In what follows, $\mathcal{L}$ will denote the ideal generated by the partial derivative 
$u_1 \frac{\partial f}{\partial u_1 }, \cdots , u_n \frac{\partial f}{\partial u_n }$ of $f$.
By projection, the Newton filtration $\mathcal{N}_{\bullet}$ on $\mathcal{B}$ induces
the Newton filtration $\mathcal{N}_{\bullet}$ on $\frac{\mathcal{B}}{\mathcal{L}}$ 
and we get the graded ring
$\gr^{\mathcal{N}}(\frac{\mathcal{B}}{\mathcal{L}})=\oplus_{\alpha}\gr^{\mathcal{N}}_{\alpha}(\frac{\mathcal{B}}{\mathcal{L}})$.

\begin{definition} The toric Newton spectrum of a
convenient and nondegenerate polynomial $f$ is the Hilbert-Poincar\'e series $P_{\gr^{\mathcal{N}}(\frac{\mathcal{B}}{\mathcal{L}})}(z):=\sum_{\alpha\in\qit}\dim \gr^{\mathcal{N}}_{\alpha}(\frac{\mathcal{B}}{\mathcal{L}})z^{\alpha}$.
\end{definition}

\noindent We will also see the toric Newton spectrum as the sequence of rational numbers $\alpha$,
where each $\alpha$ is counted $\dim \gr^{\mathcal{N}}_{\alpha}(\frac{\mathcal{B}}{\mathcal{L}})$-times.

\begin{theorem} \label{theo:ToricSpectrumSeries}
Let $f$ be a convenient and nondegenerate polynomial on $\cit^n$.
Then
\begin{equation}\label{eq:ToricSpectrumFonda}
P_{\gr^{\mathcal{N}}(\frac{\mathcal{B}}{\mathcal{L}})}(z)=(1-z)^n \sum_{v\in \nit^n} z^{\nu (v)}.
\end{equation}
In particular, the toric Newton spectrum of $f$ depends only on its Newton polytope.
\end{theorem}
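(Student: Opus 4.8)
The plan is to exploit the fact, recalled from \cite{K}, that the ideal $\mathcal{L}$ generated by $u_1\frac{\partial f}{\partial u_1},\dots,u_n\frac{\partial f}{\partial u_n}$ is adapted to the Newton filtration in the strongest possible sense: the associated graded of these generators are exactly the elements $\sum_m \nu(m)\, b_m\,\delta_m$ of $B$, and nondegeneracy guarantees that the Koszul complex on the images $\overline{u_i\frac{\partial f}{\partial u_i}}$ in the graded ring $B$ is acyclic except in top degree. Concretely, I would first pass from $\mathcal{B}/\mathcal{L}$ to its associated graded ring: because $\mathcal{L}$ is a homogeneous ideal with respect to $\mathcal{N}$ in the sense that its symbol ideal $\sigma(\mathcal{L})\subset B$ is generated by the symbols of the given generators (this is where Newton nondegeneracy enters, via Kouchnirenko), one gets
\begin{equation}\nonumber
\gr^{\mathcal{N}}\!\left(\frac{\mathcal{B}}{\mathcal{L}}\right)\;\cong\;\frac{B}{\sigma(\mathcal{L})},
\end{equation}
so that $P_{\gr^{\mathcal{N}}(\mathcal{B}/\mathcal{L})}(z)=P_{B/\sigma(\mathcal{L})}(z)$. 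This reduces the problem to a computation purely inside the combinatorially defined graded ring $B$.

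Next I would compute $P_{B/\sigma(\mathcal{L})}(z)$ via a Koszul resolution. The symbols $g_i:=\sigma(u_i\frac{\partial f}{\partial u_i})\in B$ form a regular sequence in $B$ (again by Kouchnirenko's nondegeneracy criterion, which is precisely the statement that the corresponding toric/Newton-graded critical locus is finite), so the Koszul complex $K_\bullet(g_1,\dots,g_n;B)$ is a graded free resolution of $B/\sigma(\mathcal{L})$. Each $g_i$ is homogeneous of degree $1$ for the grading by $\nu$ (multiplying by $u_i$ and differentiating by $u_i$ leaves $\nu$ unchanged on monomials), so $K_p$ is a direct sum of $\binom{n}{p}$ copies of $B$ shifted by $p$. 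Taking the alternating sum of Hilbert–Poincaré series along the resolution yields
\begin{equation}\nonumber
P_{B/\sigma(\mathcal{L})}(z)=(1-z)^n\,P_B(z).
\end{equation}

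Finally I would identify $P_B(z)$. By definition $B=\oplus_\alpha \mathcal{B}_\alpha/\mathcal{B}_{<\alpha}$, and a monomial basis of $\mathcal{B}_\alpha/\mathcal{B}_{<\alpha}$ is given by $\{\delta_v : v\in\nit^n,\ \nu(v)=\alpha\}$; hence $\dim B_\alpha=\card\{v\in\nit^n:\nu(v)=\alpha\}$ and therefore $P_B(z)=\sum_{v\in\nit^n}z^{\nu(v)}$. Combining the three steps gives \eqref{eq:ToricSpectrumFonda}. The ``in particular'' is then immediate: the right-hand side involves only $\nu$, which by construction depends only on the facets of the Newton boundary of $P$, i.e. only on $P$.

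The step I expect to be the main obstacle is the first one — justifying $\gr^{\mathcal{N}}(\mathcal{B}/\mathcal{L})\cong B/\sigma(\mathcal{L})$, equivalently that the symbol ideal $\sigma(\mathcal{L})$ is generated by $\sigma(u_1\frac{\partial f}{\partial u_1}),\dots,\sigma(u_n\frac{\partial f}{\partial u_n})$ rather than being strictly larger. This is exactly the content of Kouchnirenko's theorem (the $g_i$ forming a regular sequence in $B$, and the filtration-to-grading passage being exact here), so in the write-up I would state this as a citation to \cite{K} and spend the bulk of the argument making the Koszul bookkeeping and the degree shifts precise; convergence issues with the infinite sum $\sum_{v}z^{\nu(v)}$ are handled by working degree-by-degree, each graded piece being finite-dimensional because $f$ is convenient.
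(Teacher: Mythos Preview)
Your proposal is correct and follows essentially the same route as the paper: identify $\gr^{\mathcal{N}}(\mathcal{B}/\mathcal{L})$ with $B/F$ (your $B/\sigma(\mathcal{L})$) via the strictness result of \cite{K}, then read off the Hilbert--Poincar\'e series from the exact Koszul complex on the degree-$1$ symbols $F_i$ in $B$, and finally observe $P_B(z)=\sum_{v\in\nit^n}z^{\nu(v)}$. The only cosmetic point to tidy is that $\gr^{\mathcal{N}}(\mathcal{B}/\mathcal{L})\cong B/\sigma(\mathcal{L})$ is tautological for the \emph{true} symbol ideal; the nontrivial input from \cite{K} (your ``main obstacle'') is precisely the equality $\sigma(\mathcal{L})=(g_1,\dots,g_n)$, which the paper phrases as strictness of $\partial$ and deduces from exactness of the Koszul sequence.
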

\begin{proof}
Let $F_i$ be the class of $x_i \frac{\partial f}{\partial x_i }$ in $B_1 =\frac{\mathcal{B}_{1}}{\mathcal{B}_{<1}}$
and let $F$ be the ideal generated by $F_1 ,\cdots , F_n $ in $B$. By \cite[Th\'eor\`eme 2.8]{K}, and because $f$ is nondegenerate, the sequence 
\begin{equation}\label{eq:SuiteExacteFondPol}
0\rightarrow B^{\binom{n}{n}}\rightarrow B^{\binom{n}{n-1}}\rightarrow\cdots\rightarrow B^{\binom{n}{1}}\stackrel{d_1}{\rightarrow} B
\rightarrow B/F \rightarrow 0
\end{equation}
that we get from the Koszul complex of the elements $F_1 , \cdots F_n$ in the ring $B$ is exact.
It follows that the map 
\begin{equation}\label{eq:partial}
\partial : \mathcal{B}^n \rightarrow \mathcal{B}
\end{equation}
defined
by $\partial (b_1 ,\cdots , b_n )=b_1 u_1 \frac{\partial f}{\partial u_1 }+\cdots + b_n u_n \frac{\partial f}{\partial u_n }$
 is {\em strict} with respect 
to the Newton filtration, see \cite[Th\'eor\`eme 4.1]{K}. Therefore
$\gr^{\mathcal{N}}(\frac{\mathcal{B}}{\mathcal{L}})\cong  \frac{B}{F}$
and
\begin{equation}\label{eq:B/F}
P_{\gr^{\mathcal{N}}(\frac{\mathcal{B}}{\mathcal{L}})}(z)=P_{B/F}(z). 
\end{equation}
Now,
by (\ref{eq:SuiteExacteFondPol}), we have also
\begin{equation}\label{eq:PoincareSeriesQuotientPol}
 P_{B/F}(z)=(1-z)^n P_B (z) 
\end{equation}
where $P_B (z)$ denotes the Hilbert-Poincar\'e series of the graded ring $B$.
\end{proof}

For $\Delta$ a closed face of $P$ whose vertices are $b_{i_1} ,\cdots ,b_{i_k}$, let us define
\begin{equation}\label{eq:Box}
\Box (\Delta ):=\{\sum_{\ell =1}^{k} q_{\ell} b_{i_{\ell}},\ q_{\ell} \in [0,1[,\ \ell =1,\cdots , k\}.
\end{equation}
Let $\mathcal{F} (P)$ be the set of the closed faces of $P$ not contained in the union of the hyperplane coordinates.
We have the following combinatorial description of the toric Newton spectrum:

\begin{proposition}\label{prop:ToricSpectrumBox}
Let $f$ be a convenient and nondegenerate polynomial on $\cit^n$ and let $P$ be its Newton polytope. 
Assume that the faces of $\mathcal{F} (P)$ are simplices.
Then
\begin{equation}\nonumber
 P_{\gr^{\mathcal{N}}(\frac{\mathcal{B}}{\mathcal{L}})}(z)=\sum_{q=0}^{n-1} \sum_{\dim \Delta =q} (z-1)^{n-1-q} 
\sum_{v\in\Box (\Delta )\cap \nit^n}z^{\nu (v)}
\end{equation}
where $\Delta\in \mathcal{F} (P)$. 
\end{proposition}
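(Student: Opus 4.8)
The plan is to rewrite the right-hand side of Theorem~\ref{theo:ToricSpectrumSeries}, namely $(1-z)^n\sum_{v\in\nit^n}z^{\nu(v)}$, as a sum over the faces of the Newton boundary by an inclusion--exclusion over the cones on its facets, and then to identify the resulting coefficients via a Möbius/Euler-characteristic computation.

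\textbf{Step 1 (reduction to a statement about closed cones).} For $\Delta\in\mathcal{F}(P)$ with vertices $b_1,\dots,b_k$ set $C(\Delta)=\{\sum_\ell t_\ell b_\ell:t_\ell\ge 0\}$ and $G_\Delta(z)=\sum_{v\in C(\Delta)\cap\nit^n}z^{\nu(v)}$. Since $\Delta$ is a simplex and $0\notin\mathrm{aff}(\Delta)$, the $b_\ell$ are linearly independent, so each lattice point of $C(\Delta)$ is uniquely $w+\sum_\ell m_\ell b_\ell$ with $m_\ell\in\nit$ and $w\in\Box(\Delta)\cap\nit^n$; moreover $\nu(b_\ell)=1$ (each $b_\ell$ lies on a facet of the Newton boundary) and $\nu$ is additive on the elements of the single cone $C(\Delta)$, so
\[
G_\Delta(z)=\frac{1}{(1-z)^{\dim\Delta+1}}\sum_{v\in\Box(\Delta)\cap\nit^n}z^{\nu(v)}.
\]
Substituting this into the asserted formula, using $(z-1)^{n-1-\dim\Delta}=(-1)^{n-1-\dim\Delta}(1-z)^{n-1-\dim\Delta}$ and Theorem~\ref{theo:ToricSpectrumSeries}, and dividing by $(1-z)^n$, the statement becomes equivalent to
\[
\sum_{v\in\nit^n}z^{\nu(v)}=\sum_{\Delta\in\mathcal{F}(P)}(-1)^{\,n-1-\dim\Delta}\,G_\Delta(z).
\]

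\textbf{Step 2 (inclusion--exclusion).} Since $f$ is convenient one checks that $\{x\in\rit^n_+:\nu(x)\le 1\}=P$ and $\nu(x)>0$ for $x\ne 0$, hence that for each facet $F$ of the Newton boundary $C(F)=\{x\in\rit^n_+:\langle u_F,x\rangle=\nu(x)\}$. Therefore $\rit^n_+=\bigcup_F C(F)$ and $\bigcap_{F\in\mathcal{S}}C(F)=C\big(\bigcap_{F\in\mathcal{S}}F\big)$ for every nonempty set $\mathcal{S}$ of facets, with the convention $C(\emptyset)=\{0\}$, $G_\emptyset(z)=1$. Applying inclusion--exclusion to $\nit^n=\bigcup_F\big(C(F)\cap\nit^n\big)$ and collecting terms according to the face $\Delta=\bigcap_{F\in\mathcal{S}}F$ gives $\sum_{v\in\nit^n}z^{\nu(v)}=\sum_\Delta a(\Delta)G_\Delta(z)$, the sum running over the faces of the Newton boundary together with $\emptyset$, where $a(\Delta)=\sum_{\mathcal{S}\ne\emptyset,\ \bigcap\mathcal{S}=\Delta}(-1)^{|\mathcal{S}|+1}$. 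It remains to prove that $a(\Delta)=(-1)^{n-1-\dim\Delta}$ for $\Delta\in\mathcal{F}(P)$ and $a(\Delta)=0$ otherwise.

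\textbf{Step 3 (identifying $a(\Delta)$).} Reordering, $\sum_{\Gamma\supseteq\Delta}a(\Gamma)=\sum_{\emptyset\ne\mathcal{S}\subseteq\mathcal{T}_\Delta}(-1)^{|\mathcal{S}|+1}=1$, where $\mathcal{T}_\Delta$ is the nonempty set of facets containing $\Delta$. Every face of the Newton boundary is a simplex (each lies in a facet, and a facet of the Newton boundary cannot lie in a coordinate hyperplane, so it belongs to $\mathcal{F}(P)$), hence every interval of the face poset is Boolean, and Möbius inversion of $\sum_{\Gamma\supseteq\Delta}a(\Gamma)=1$ gives $a(\Delta)=\sum_{\Gamma\supseteq\Delta}(-1)^{\dim\Gamma-\dim\Delta}=-\tilde\chi\big(\mathrm{lk}(\Delta)\big)$, the reduced Euler characteristic of the link of $\Delta$ in the Newton boundary. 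Finally the Newton boundary is a PL ball of dimension $n-1$ whose boundary sphere is precisely the union of the faces contained in the coordinate hyperplanes, i.e. the faces of the Newton boundary not in $\mathcal{F}(P)$; hence $\mathrm{lk}(\Delta)$ is a sphere $S^{\,n-2-\dim\Delta}$ for $\Delta\in\mathcal{F}(P)$ and a ball (the whole $(n-1)$-ball when $\Delta=\emptyset$) otherwise, so $a(\Delta)=(-1)^{n-1-\dim\Delta}$ on $\mathcal{F}(P)$ and $a(\Delta)=0$ elsewhere, which finishes the proof.

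I expect the main obstacle to be the two geometric inputs used above, the description $C(F)=\{x:\langle u_F,x\rangle=\nu(x)\}$ and the fact that the Newton boundary is a PL $(n-1)$-ball with the stated boundary sphere, both of which really use that $f$ is convenient; once these are available the combinatorial bookkeeping (the unfolding identity of Step~1 and the Möbius inversion of Step~3) is routine.
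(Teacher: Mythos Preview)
Your proof is correct, but it takes a genuinely different route from the paper's. After invoking Theorem~\ref{theo:ToricSpectrumSeries}, both arguments share your Step~1 (the standard simplicial-cone identity $(1-z)^{q+1}P_{B_\Delta}(z)=\sum_{v\in\Box(\Delta)\cap\nit^n}z^{\nu(v)}$). The divergence is in how the alternating sum over $\mathcal{F}(P)$ is obtained. The paper simply quotes Kouchnirenko's exact sequence of graded $B$-modules $0\to B\to C_{n-1}\to\cdots\to C_0\to 0$ with $C_q=\bigoplus_{\dim\Delta=q}B_\Delta$ \cite[Proposition~2.6]{K}, which immediately yields $P_B(z)=\sum_{q}\sum_{\dim\Delta=q}(-1)^{n-1-q}P_{B_\Delta}(z)$ and hence the result after multiplying by $(1-z)^n$. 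You instead reprove this identity at the level of generating series: inclusion--exclusion over the facet cones produces coefficients $a(\Delta)$, and you identify them via M\"obius inversion and the PL fact that the Newton boundary is an $(n-1)$-ball whose boundary sphere is exactly the part lying in the coordinate hyperplanes (so links are spheres on $\mathcal{F}(P)$ and balls elsewhere). What the paper's approach buys is brevity and a direct link to Kouchnirenko's machinery already used in Theorem~\ref{theo:ToricSpectrumSeries}; what your approach buys is self-containedness---you avoid the black-box exact sequence at the cost of the topological input you flag at the end (which is indeed the only nontrivial point, and does hold precisely because $f$ is convenient, making $0$ a vertex of $P$ whose antistar in $\partial P$ is the Newton boundary). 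Your observation that facets of the Newton boundary cannot lie in a coordinate hyperplane, so that simpliciality of $\mathcal{F}(P)$ forces the whole Newton boundary to be simplicial, is exactly what is needed to make the M\"obius step go through.
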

\begin{proof}
For $\Delta$ a closed face of $P$, let $\Cone (\Delta )$ be the union of the half straight lines starting from the origin and passing through $\Delta$ and $B_{\Delta}=\{g\in\mathcal{B}, \ \mbox{supp} (g)\in \nit^n \cap \Cone (\Delta )\}$.
Because the polynomial $f$ is convenient, there is by \cite[Proposition 2.6]{K} an exact sequence of graded $B$-modules
\begin{equation}\nonumber
0\rightarrow B\rightarrow C_{n-1}\rightarrow C_{n-2}\rightarrow\cdots \rightarrow C_0 \rightarrow 0 
\end{equation}
where $C_q =\oplus_{\dim \Delta = q} B_{\Delta}$ for $\Delta\in\mathcal{F}(P)$.
It follows that
$$P_{B} (z)=\sum_{q=0}^{n-1} \sum_{\dim \Delta =q} (-1)^{n-1-q} P_{B_{\Delta}}(z)$$
where $\Delta\in \mathcal{F} (P)$. 
Using (\ref{eq:PoincareSeriesQuotientPol}), we get
\begin{equation}\nonumber
 P_{B/F} (z)=\sum_{q=0}^{n-1} \sum_{\dim \Delta =q} (-1)^{n-1-q} (1-z)^{n-1-q} (1-z)^{q+1}P_{B_{\Delta}}(z).
\end{equation}
Last, 
\begin{equation}\nonumber
(1-z)^{q+1}P_{B_{\Delta}}(z) = (1-z)^{q+1} \sum_{v\in \Cone (\Delta)\cap \nit^n} z^{\nu (v)}=\sum_{v\in \Box (\Delta )\cap \nit^n} z^{\nu (v)}  
\end{equation}
if $\dim \Delta =q$. Now the assertion follows from (\ref{eq:B/F}).
\end{proof}

\begin{remark} \label{rem:MufMuP}
Define $\mu_P :=n! \vol (P)$ where
the volume $\vol (P)$ of the Newton polytope $P$ is normalized such that the volume of the cube is equal to one. Then
$\dim_{\cit}\frac{\mathcal{B}}{\mathcal{L}}= \mu_P$, see \cite{K}, and it follows 
from Proposition \ref{prop:ToricSpectrumBox} that 
$P_{\gr^{\mathcal{N}}(\frac{\mathcal{B}}{\mathcal{L}})}(z)=\sum_{i=1}^{\mu_P} z^{\beta_i}$ where the $\beta_i$'s are nonnegative rational numbers.
\end{remark}

Last, we give some expected consequences of the previous results (in particular we get a complete description of the toric Newton spectrum for $n=2$, compare with \cite[Example 4.17]{DoSa1}).
If $I$ is an interval of $\rit$ and $P_C (z) =\sum_{\alpha\in\qit}\dim C_{\alpha} z^{\alpha}$ we define $P_C^{I}  (z)=\sum_{\alpha\in I}\dim C_{\alpha} z^{\alpha}$.

\begin{corollary}\label{coro:ToricSpectrum01}
Let $f$ be a convenient and nondegenerate polynomial on $\cit^n$ and let $P$ be its Newton polytope.
Then,
\begin{enumerate}
\item $P_{\gr^{\mathcal{N}}(\frac{\mathcal{B}}{\mathcal{L}})} (z) =
P_{\gr^{\mathcal{N}}(\frac{\mathcal{B}}{\mathcal{L}})}^{[0,n[} (z)$ if $P$ is simplicial,
\item $P_{\gr^{\mathcal{N}}(\frac{\mathcal{B}}{\mathcal{L}})}^{[0,1[} (z) =\sum_{v\in N,\ \nu (v)<1} z^{\nu (v)}$,
\item the coefficient of $z$ in $P_{\gr^{\mathcal{N}}(\frac{\mathcal{B}}{\mathcal{L}})}(z)$ is 
equal to the number of lattice points on the Newton boundary of $P$ minus $n$. 
\end{enumerate}
\end{corollary}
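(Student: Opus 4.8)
The plan is to deduce all three items from results already established: (2) and (3) directly from the product formula of Theorem~\ref{theo:ToricSpectrumSeries}, and (1) from the Box decomposition of Proposition~\ref{prop:ToricSpectrumBox} together with a crude degree bound.

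For (1), note first that since $P$ is simplicial every proper face of $P$ is a simplex, so in particular the faces $\Delta\in\mathcal{F}(P)$ are simplices and Proposition~\ref{prop:ToricSpectrumBox} applies. Fix such a $\Delta$, with $\dim\Delta=q$ and vertices $b_{i_0},\dots,b_{i_q}$, and let $v\in\Box(\Delta)\cap\nit^n$, say $v=\sum_{\ell=0}^{q}q_\ell b_{i_\ell}$ with each $q_\ell\in[0,1[$. For every facet $F$ of the Newton boundary one has $\langle u_F,b_{i_\ell}\rangle\le 1$ (because $b_{i_\ell}\in P$), hence $\langle u_F,v\rangle=\sum_\ell q_\ell\langle u_F,b_{i_\ell}\rangle\le\sum_\ell q_\ell<q+1$; taking the maximum over $F$ gives $\nu(v)<q+1$. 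Since moreover $\nu\ge 0$ on $\nit^n$, the summand $(z-1)^{n-1-q}\sum_{v\in\Box(\Delta)\cap\nit^n}z^{\nu(v)}$ involves only monomials $z^{\beta}$ with $0\le\beta<n$ (the exponents of $(z-1)^{n-1-q}$ lie in $\{0,\dots,n-1-q\}$ and $0\le\nu(v)<q+1$). Summing over all $\Delta\in\mathcal{F}(P)$ then gives $P_{\gr^{\mathcal{N}}(\mathcal{B}/\mathcal{L})}(z)=P^{[0,n[}_{\gr^{\mathcal{N}}(\mathcal{B}/\mathcal{L})}(z)$, which is (1).

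For (2) and (3), put $a_\alpha:=\card\{v\in\nit^n:\nu(v)=\alpha\}$, so that Theorem~\ref{theo:ToricSpectrumSeries} reads $P_{\gr^{\mathcal{N}}(\mathcal{B}/\mathcal{L})}(z)=\sum_{k=0}^{n}\binom{n}{k}(-1)^k\sum_{\alpha}a_\alpha z^{\alpha+k}$. I would first record two elementary facts: $\nu\ge 0$ on $\nit^n$ with $\nu(v)=0$ only for $v=0$ (a standard consequence of convenience), whence $a_0=1$; and, for $v\in\nit^n$, $\nu(v)\le 1$ holds iff $v\in P$ while $\nu(v)=1$ holds iff $v$ lies on the Newton boundary of $P$, so that $a_1$ is exactly the number of lattice points on the Newton boundary. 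For (2): in the expansion above a monomial $z^{\alpha+k}$ has exponent $<1$ only when $k=0$, which then forces $\alpha<1$, and conversely every pair $(\alpha,0)$ with $\alpha<1$ occurs; hence the restriction of $P_{\gr^{\mathcal{N}}(\mathcal{B}/\mathcal{L})}(z)$ to $[0,1[$ is $\sum_{\alpha<1}a_\alpha z^\alpha=\sum_{v\in\nit^n,\ \nu(v)<1}z^{\nu(v)}$. For (3): the coefficient of $z$ collects the pairs $(\alpha,k)$ with $\alpha+k=1$, namely $(1,0)$ and $(0,1)$, giving $\binom{n}{0}a_1-\binom{n}{1}a_0=a_1-n$, i.e.\ the number of lattice points on the Newton boundary of $P$ minus $n$.

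The only genuinely delicate point, I expect, is in (1): one must carry the strict inequality $q_\ell<1$ all the way through to $\nu(v)<q+1$, since that is precisely what places the spectrum in the half-open interval $[0,n[$ rather than in $[0,n]$. Simpliciality is used exactly to guarantee that $\Box(\Delta)$ is built from only $q+1=\dim\Delta+1$ vertices, so that $\sum_\ell q_\ell<q+1$; for a non-simplicial face with $k>q+1$ vertices the bound degrades to $\sum_\ell q_\ell<k$, which may reach or exceed $n$. Items (2) and (3) are then routine bookkeeping, once $a_0=1$ and the description of $a_1$ are in place.
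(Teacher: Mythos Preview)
Your proof is correct and follows essentially the same route as the paper. For (1) the paper invokes Proposition~\ref{prop:ToricSpectrumBox} and the bound $\nu(v)<q+1$ for $v\in\Box(\Delta)$ exactly as you do (your argument makes the inequality explicit); for (3) both you and the paper read off the coefficient of $z$ from Theorem~\ref{theo:ToricSpectrumSeries}. The only minor difference is in (2): the paper argues structurally, using strictness of $\partial$ to see that $\gr^{\mathcal N}_\alpha(\mathcal B/\mathcal L)\cong B_\alpha$ for $\alpha<1$, whereas you extract the same conclusion by expanding $(1-z)^n P_B(z)$ and observing that only the $k=0$ term survives below degree~$1$; since Theorem~\ref{theo:ToricSpectrumSeries} itself rests on that strictness, the two arguments are equivalent in content.
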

\begin{proof} The first assertion follows from Proposition \ref{prop:ToricSpectrumBox} because $\nu (v)<q+1$ if 
$v\in\Box (\Delta )\cap \nit^n$ for a $q$-dimensional face $\Delta$ of $P$.
Because the map (\ref{eq:partial}) is strict with respect to the Newton filtration,
we have $\mathcal{L}(f)\cap \mathcal{N}_{\alpha}\mathcal{B}= \im \partial \cap \mathcal{N}_{\alpha}\mathcal{B} = 
\partial (\mathcal{N}_{\alpha -1}( \mathcal{B}^n))$
and because $\mathcal{N}_{\beta}\mathcal{B}=0$ if $\beta <0$, we get
\begin{equation}\nonumber
\frac{\mathcal{N}_{\alpha}\mathcal{B}}{\mathcal{L}(f)\cap \mathcal{N}_{\alpha}\mathcal{B}+\mathcal{N}_{<\alpha}\mathcal{B}}
= \frac{\mathcal{N}_{\alpha}\mathcal{B}}{\mathcal{N}_{<\alpha}\mathcal{B}}
\end{equation}
for $\alpha < 1$. Thus, $P_{\gr^{\mathcal{N}}(\frac{\mathcal{B}}{\mathcal{L}})}^{[0,1[} (z)=\sum_{\alpha <1}\dim B_{\alpha}z^{\alpha}$ and the second assertion follows. The third one follows from the computation of the coefficient of $z$ in the formula of Theorem \ref{theo:ToricSpectrumSeries}.
\end{proof}

\begin{remark} It follows that the value $0$ appears in the toric Newton spectrum with multiplicity one: in particular,  the toric Newton spectrum is not symmetric about $\frac{n}{2}$. 
\end{remark}

\section{Toric Newton spectrum and spectrum at infinity}

\label{sec:SpectrumPol}

Let $f$ is a convenient and nondegenerate polynomial on $\cit^n$ with Newton polytope $P$ and let
$\mathcal{I}$ be the ideal generated by the partial derivative $\frac{\partial f}{\partial u_1 }, \cdots , \frac{\partial f}{\partial u_n }$ of $f$.
By \cite{K}, $f$ has only isolated critical points and its global Milnor number 
$\mu_f :=\dim_{\cit}\mathcal{B}/(\frac{\partial f}{\partial u_1 }, \frac{\partial f}{\partial u_2 },\cdots , \frac{\partial f}{\partial u_n })$ is finite.
In this text, a {\em geometric spectrum} $\Spec_f$ of $f$ is an ordered sequence of rational numbers
$\alpha_{1}\leq \cdots\leq  \alpha_{\mu_f }$
that we will identify with the generating function
$\Spec_f (z) :=\sum_{i=1}^{\mu_f}z^{\alpha_{i}}$. The specifications are the following:
the $\alpha_i$'s are positive numbers and $\Spec_f (z)=z^n \Spec_f (z^{-1})$.
We can ask moreover that the multiplicity of $\alpha_1$ in $\Spec_f$ is equal to one (normalization):
this normalization is not automatic and appears in the construction of ``canonical'' Frobenius manifolds in the sense of \cite{DoSa1}.

We first recall how to get a geometric spectrum from the Newton filtration.
We define an increasing filtration, indexed by $\qit$, $W_{\bullet}$ on $\mathcal{B} :=\cit [u_1 ,\cdots , u_n ]$ by
\begin{equation}\label{eq:FiltrationTordue}
W_{\alpha}\mathcal{B} :=\{g\in \mathcal{B},\ \supp (gu_1 \cdots u_n ) \in \nu^{-1}(]-\infty ; \alpha ]) \}.
\end{equation} 
By projection, we get a filtration on $\frac{\mathcal{B}}{\mathcal{I}}$.
We set $B^* _{\alpha}:=\gr_{\alpha}^W \frac{\mathcal{B}}{\mathcal{I}}$ and 
$B^* :=\oplus_{\alpha}B^* _{\alpha}$.

\begin{definition}\label{def:NewtonSpectrum}
The Newton spectrum of the convenient and nondegenerate polynomial $f$ is the Hilbert-Poincar\'e series 
$P_{B^*} (z)= \sum_{\alpha\in\qit}\dim B^* _{\alpha} \ z^{\alpha}$.
\end{definition}

\noindent We will also see the Newton spectrum as a sequence of rational numbers with the following property: the frequency of $\alpha$ in the sequence is equal to $\dim B^{*}_{\alpha}$.

In order to justify this definition (in particular to explain the twist by $u_1\cdots u_n$), let us recall the definition of the spectrum at infinity of a convenient and nondegenerate polynomial $f$ defined on $U=\cit^n$ (we use the notations of \cite[2.c]{DoSa1}). 
Let $G$ be the Fourier-Laplace transform of the Gauss-Manin system of $f$ and let $G_{0}$ be its Brieskorn lattice.
Because $f$ is convenient and nondegenerate, $G_0$ is indeed a free $\cit [\theta ]$-module of rank $\mu_f$ and $G=\cit [\theta ,\theta^{-1}]\otimes G_0$: this follows for instance from the strictness of the map 
(\ref{eq:partial}), see \cite[Remark 4.8]{DoSa1}. 
Last, let $V_{\bullet}$ be the $V$-filtration of $G$ at infinity, that is along $\theta^{-1}=0$. It provides,
by projection, a $V$-filtration on 
the $\mu_f$-dimensional vector space $\Omega_{f}:=\Omega^n (U)/df\wedge \Omega^{n-1}(U)=G_0 / \theta G_0$.
The {\em spectrum at infinity}  of $f$ is 
$\Spec_f ^{\infty} (z):=\sum_{i=1}^{\mu_f} z^{\alpha_i}$ where
the sequence 
$\alpha_{1}, \alpha_{2},\cdots , \alpha_{\mu_f}$
of rational numbers is defined by the following property: the frequency of $\alpha$ in the sequence is 
equal to $\dim \gr^{V}_{\alpha}\Omega_{f}$, see \cite[Section 1 and Corollary 10.2]{Sab}
where it is shown that the spectrum at infinity is a geometric spectrum (notice however that it does not satisfy the normalization condition in general, see \cite{DoSa1}).

We define a Newton filtration on $\Omega^n (U)$ by decreeing that the Newton order of the volume form $\frac{du_1}{u_1}\wedge\cdots\wedge \frac{du_n}{u_n}$ is zero:
the Newton filtration $\mathcal{N}_{\bullet}$ on $\Omega^n (U)$ is thus the increasing filtration $\mathcal{N}_{\bullet}$ indexed by $\qit$ such that
\begin{equation}\nonumber
\mathcal{N}_{\alpha}\Omega^n (U):=\{\omega\in\Omega^n (U),\  \supp (\omega ) \in \nu^{-1}(]-\infty ; \alpha ]) \}
\end{equation}
where $\supp (\omega )=\{m+(1,\cdots ,1)\in\nit^n,\ a_m \neq 0\}$
if $\omega =\sum_{m\in \zit^n}a_m u^m du_1\wedge\cdots\wedge du_n$. This filtration induces a filtration on $\Omega_f$ and, by construction, the Newton spectrum of $f$ of Definition \ref{def:NewtonSpectrum} is equal to the spectrum of this Newton filtration on $\Omega_f$. 
With this normalization of the volume form, the Newton filtration satisfies the characteristic properties of the $V$-filtration $V_{\bullet}$ hence is equal to it \cite{Sab}, \cite{DoSa1} and we get (in the local case, that is if $f$ is the germ of a holomorphic function with an isolated critical point at the origin, an analogous result is given in \cite{KV} and \cite{MS}):

\begin{theorem}\cite[Theorem 12.1]{Sab}
\label{theo:SpecVSpecN}
The spectrum at infinity of a convenient and nondegenerate polynomial $f$ is equal to its Newton spectrum: $\Spec_f ^{\infty} (z)=P_{B^*} (z)$.
\end{theorem}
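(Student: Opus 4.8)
This is \cite[Theorem 12.1]{Sab}; the natural route, already hinted at in the paragraph preceding the statement, is to recognise the Newton filtration on $\Omega_{f}$ as the $V$-filtration by means of the uniqueness property of the latter. Since the strictness of the map (\ref{eq:partial}) makes $G_{0}$ a free $\cit[\theta]$-module of rank $\mu_{f}$ with $G_{0}/\theta G_{0}=\Omega_{f}$ and $G=\cit[\theta,\theta^{-1}]\otimes_{\cit[\theta]}G_{0}$, I would first extend the Newton filtration from $\Omega_{f}$ to all of $G$: pick a homogeneous $\cit$-basis of $\gr^{\mathcal{N}}\Omega_{f}$ with Newton orders $\beta_{1},\dots,\beta_{\mu_{f}}$, lift it to a $\cit[\theta]$-basis of $G_{0}$, and declare the Newton order of $\theta$ to be $1$. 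This yields an increasing, exhaustive, $\qit$-indexed filtration $\widetilde{\mathcal{N}}_{\bullet}G$ by $\cit[\theta^{-1}]$-submodules whose restriction to $G_{0}/\theta G_{0}=\Omega_{f}$ is the filtration of Definition \ref{def:NewtonSpectrum}; in particular $\sum_{\alpha}\dim\gr^{\widetilde{\mathcal{N}}}_{\alpha}(G_{0}/\theta G_{0})z^{\alpha}=P_{B^{*}}(z)$.

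Next I would invoke the characterisation of the $V$-filtration $V_{\bullet}G$ of $G$ along $\theta^{-1}=0$: it is the unique increasing, exhaustive and separated $\qit$-filtration by finitely generated $\cit[\theta^{-1}]$-submodules $V_{\alpha}$ such that $\theta^{-1}V_{\alpha}\subseteq V_{\alpha-1}$, with equality for $\alpha\ll 0$, and such that the residue of the connection has the single eigenvalue $\alpha$ on each $\gr^{V}_{\alpha}G$ (equivalently $\theta\partial_{\theta}+\alpha$, in the normalization of \cite{Sab}, acts nilpotently there). It then suffices to check these properties for $\widetilde{\mathcal{N}}_{\bullet}$. The ``algebraic'' ones---coherence over $\cit[\theta^{-1}]$, exhaustiveness, separatedness and the lattice behaviour under $\theta^{-1}$---follow at once from the construction above together with the finiteness statements of \cite{K}, convenience of $f$ being used precisely here.

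The heart of the matter, and the step I expect to be the only genuinely delicate one, is the compatibility with the connection. Under the Fourier--Laplace description of $G$ used in \cite{DoSa1}, the operator $\theta^{2}\nabla_{\partial_{\theta}}$ is induced by multiplication by $f$ on $\Omega^{n}(U)[\theta]$; since every monomial occurring in $f$ has Newton order at most $1$, this multiplication sends $\mathcal{N}_{\alpha}$ into $\mathcal{N}_{\alpha+1}$, and on the associated graded it is given by multiplication by the sum of the restrictions of $f$ to the maximal cones, i.e.\ by its Newton-degree-one part $f_{1}$ on each cone. Feeding this into Kouchnirenko's exact Koszul complex for $u_{1}\partial_{u_{1}}f,\dots,u_{n}\partial_{u_{n}}f$ in $B$ (\cite[Th\'eor\`eme 2.8]{K}), using the identification $\gr^{\mathcal{N}}(\mathcal{B}/\mathcal{I})\cong B^{*}$ and the Euler relation $\sum_{i}u_{i}\partial_{u_{i}}f_{1}=f_{1}$ valid on each cone (together with the normalization of the volume form $\tfrac{du_{1}}{u_{1}}\wedge\cdots\wedge\tfrac{du_{n}}{u_{n}}$, which is what produces the shift by $(1,\dots,1)$), one should obtain that the operator induced by $\theta\partial_{\theta}$ on $\gr^{\widetilde{\mathcal{N}}}_{\alpha}G$ is scalar multiplication by $\alpha$ up to a nilpotent endomorphism. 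Carrying the Brieskorn-lattice relations and the connection cleanly through the Newton grading, and checking that the volume-form normalization makes the eigenvalue land exactly on the Newton order, is the part that requires care.

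Granting this, the uniqueness recalled above gives $\widetilde{\mathcal{N}}_{\bullet}=V_{\bullet}$ on $G$, hence the induced filtrations coincide on $\Omega_{f}=G_{0}/\theta G_{0}$, and therefore $P_{B^{*}}(z)=\sum_{\alpha}\dim\gr^{\widetilde{\mathcal{N}}}_{\alpha}\Omega_{f}\,z^{\alpha}=\sum_{\alpha}\dim\gr^{V}_{\alpha}\Omega_{f}\,z^{\alpha}=\Spec_{f}^{\infty}(z)$, which is the assertion.
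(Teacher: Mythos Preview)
Your proposal is essentially correct and follows the same route that the paper indicates (and that \cite{Sab}, \cite{DoSa1} carry out in detail): the paper does not give its own proof of this theorem but, in the sentence immediately preceding the statement, summarises the argument as ``the Newton filtration satisfies the characteristic properties of the $V$-filtration $V_{\bullet}$ hence is equal to it''. Your sketch unpacks exactly this, correctly isolating the connection compatibility as the delicate step. One minor remark: rather than extending the Newton filtration to $G$ via a choice of homogeneous basis of $\gr^{\mathcal{N}}\Omega_{f}$, the references define it intrinsically on $\Omega^{n}(U)[\theta]$ (with $\theta$ of Newton degree $1$) and check directly that it descends to $G_{0}$ and extends to $G$; this avoids having to verify basis-independence and makes the compatibility with $\theta^{2}\nabla_{\partial_{\theta}}=f\cdot$ more transparent.
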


The Newton spectrum (hence the spectrum at infinity) can be computed from the toric Newton spectrum. 
The first step is given by the following counterpart of \cite[Proposition B.1.2.3]{BGMM}:

\begin{lemma}\label{lemma:grWgrN}
Let $f$ be a convenient and nondegenerate polynomial on $\cit^n$.
Then the multiplication by $u_1 \cdots u_n$ induces isomorphisms
\begin{equation}\nonumber
\lambda_{\alpha} :\gr_{\alpha}^W \frac{\mathcal{B}}{\mathcal{I}}\longrightarrow  \gr_{\alpha}^{\mathcal N}\frac{\mathcal{B}u_1 \cdots u_n +\mathcal{L}(f)}{\mathcal{L}(f)}
\subset \gr_{\alpha}^{\mathcal N}\frac{\mathcal{B} +\mathcal{L}(f)}{\mathcal{L}(f)}
\end{equation}
for $\alpha\in\qit$. 
\end{lemma}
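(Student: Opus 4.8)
The plan is to show that multiplication by $u_1\cdots u_n$ on $\mathcal{B}$ is compatible with the two filtrations in the sense that it sends $W_\alpha\mathcal{B}$ isomorphically onto $\mathcal{N}_\alpha(\mathcal{B}u_1\cdots u_n)$, and then to check that this compatibility descends to the quotients by the respective Jacobian-type ideals. First I would record the elementary observation that, by the very definition \eqref{eq:FiltrationTordue} of $W_\bullet$, the map $m_u: g\mapsto gu_1\cdots u_n$ satisfies $g\in W_\alpha\mathcal{B}$ if and only if $\supp(gu_1\cdots u_n)\subset\nu^{-1}(]-\infty;\alpha])$, i.e. if and only if $gu_1\cdots u_n\in\mathcal{N}_\alpha\mathcal{B}$. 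Hence $m_u$ is a bijective $\cit$-linear map $W_\alpha\mathcal{B}\to\mathcal{N}_\alpha\mathcal{B}\cap\mathcal{B}u_1\cdots u_n=\mathcal{N}_\alpha(\mathcal{B}u_1\cdots u_n)$, carrying $W_{<\alpha}\mathcal{B}$ onto $\mathcal{N}_{<\alpha}(\mathcal{B}u_1\cdots u_n)$, so it induces an isomorphism $\gr_\alpha^W\mathcal{B}\xrightarrow{\sim}\gr_\alpha^{\mathcal N}(\mathcal{B}u_1\cdots u_n)$ at the level of the ambient ring.

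Next I would pass to the quotient. Since $u_i\frac{\partial f}{\partial u_i}=u_1\cdots u_n\cdot\frac{\partial f}{\partial u_i}$ (here the monomial $u_1\cdots u_n$ appears because $u_i\frac{\partial f}{\partial u_i}$ is $u_i$ times $\frac{\partial f}{\partial u_i}$ — I must be careful here), more precisely we have the identity $u_1\cdots u_n\cdot\frac{\partial f}{\partial u_i}=\frac{u_1\cdots u_n}{u_i}\cdot u_i\frac{\partial f}{\partial u_i}$, which shows $\mathcal{B}u_1\cdots u_n\cdot\mathcal{I}(f)\subset\mathcal{B}\cdot\mathcal{L}(f)$. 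Thus $m_u$ sends $\mathcal{I}(f)$ into $\mathcal{L}(f)$, and therefore induces a map $\frac{\mathcal{B}}{\mathcal{I}(f)}\to\frac{\mathcal{B}u_1\cdots u_n+\mathcal{L}(f)}{\mathcal{L}(f)}$; I would check it is an isomorphism of $\cit$-vector spaces (surjectivity is clear by construction, and injectivity follows because $gu_1\cdots u_n\in\mathcal{L}(f)$ forces, using that $u_1\cdots u_n$ is a nonzerodivisor and the explicit form of the generators of $\mathcal{L}(f)$, that $g\in\mathcal{I}(f)$ — this is where a short argument is needed). Combining with the filtered statement above, and using strictness of the map $\partial$ of \eqref{eq:partial} with respect to $\mathcal{N}_\bullet$ (Théorème 4.1 of \cite{K}, already invoked in the proof of Theorem \ref{theo:ToricSpectrumSeries}) so that $\mathcal{L}(f)\cap\mathcal{N}_\alpha\mathcal{B}$ behaves well under taking associated gradeds, I get the desired isomorphism $\lambda_\alpha$.

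The last point is the claimed inclusion $\gr_\alpha^{\mathcal N}\frac{\mathcal{B}u_1\cdots u_n+\mathcal{L}(f)}{\mathcal{L}(f)}\subset\gr_\alpha^{\mathcal N}\frac{\mathcal{B}+\mathcal{L}(f)}{\mathcal{L}(f)}$, which is purely formal: $\mathcal{B}u_1\cdots u_n+\mathcal{L}(f)$ is a sub-$\cit$-vector space of $\mathcal{B}+\mathcal{L}(f)=\mathcal{B}$ stable under $\mathcal{N}_\bullet$, so the inclusion of filtered spaces induces an inclusion on associated gradeds provided the subspace is \emph{strict}, i.e. $(\mathcal{B}u_1\cdots u_n+\mathcal{L}(f))\cap\mathcal{N}_{<\alpha}\mathcal{B}=\mathcal{N}_{<\alpha}(\mathcal{B}u_1\cdots u_n+\mathcal{L}(f))$. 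Again this reduces, via the strictness of $\partial$, to the corresponding statement for $\mathcal{B}u_1\cdots u_n$ inside $\mathcal{B}$, which is immediate from the monomial description of $\mathcal{N}_\bullet$.

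The main obstacle I anticipate is the injectivity in the quotient step: one has to rule out that a polynomial $g$ with $gu_1\cdots u_n\in\mathcal{L}(f)$ but $g\notin\mathcal{I}(f)$ exists, i.e. to see that the relation $\mathcal{B}u_1\cdots u_n\cap\mathcal{L}(f)=u_1\cdots u_n\cdot\mathcal{I}(f)$ holds exactly (not just $\supset$). This is plausible because dividing a relation $\sum_i b_i\, u_i\frac{\partial f}{\partial u_i}=gu_1\cdots u_n$ by $u_1\cdots u_n$ yields $\sum_i (b_i u_i/u_1\cdots u_n)\frac{\partial f}{\partial u_i}=g$, but the coefficients $b_i u_i/u_1\cdots u_n$ need not lie in $\mathcal{B}$; controlling this requires using convenience of $f$ (so that $\frac{\partial f}{\partial u_i}$ contains the monomial $u_i^{n_i-1}$) or, more robustly, dimension-counting: $\dim_{\cit}\frac{\mathcal{B}}{\mathcal{I}(f)}=\mu_f$ equals $\dim_{\cit}\frac{\mathcal{B}u_1\cdots u_n+\mathcal{L}(f)}{\mathcal{L}(f)}$ by a direct check that the latter also has dimension $\mu_f$ (e.g. comparing with $\mu_P=\dim\mathcal{B}/\mathcal{L}(f)$ and the structure of $\mathcal{L}(f)$ along the coordinate hyperplanes), so a surjective linear map between equidimensional finite-dimensional spaces is automatically injective.
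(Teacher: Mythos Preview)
Your skeleton is right: $m_u$ is tautologically a strict filtered isomorphism $(\mathcal{B},W_\bullet)\to(\mathcal{B}u_1\cdots u_n,\mathcal{N}_\bullet)$, and it carries $\mathcal{I}(f)$ into $\mathcal{L}(f)\cap\mathcal{B}u_1\cdots u_n$. You also correctly isolate the crux, namely the reverse inclusion $\mathcal{L}(f)\cap\mathcal{B}u_1\cdots u_n\subset u_1\cdots u_n\cdot\mathcal{I}(f)$ (together with its filtered refinement). But neither of your proposed fixes closes this gap.

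Strictness of $\partial$ from \cite{K} only tells you that in a relation $gu_1\cdots u_n=\sum_i b_i\,u_i\partial_{u_i}f\in\mathcal{N}_\alpha\mathcal{B}$ the $b_i$ can be chosen with $\nu(b_i)\le\alpha-1$; it says nothing about divisibility of $b_i$ by $u_1\cdots\widehat{u_i}\cdots u_n$, which is what you need to divide through and land in $\mathcal{I}(f)$. Your dimension-count is essentially circular: establishing $\dim(\mathcal{B}u_1\cdots u_n+\mathcal{L}(f))/\mathcal{L}(f)=\mu_f$ independently is as hard as the lemma itself (indeed, the paper deduces the related dimension identity in Proposition~\ref{prop:SpectrumToricSpectrum} \emph{from} this lemma, not the other way round). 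The paper does not attempt either route; it invokes the proposition of \cite[Appendice~1]{BGMM}, which asserts precisely that if $gu_1\cdots u_n\in\mathcal{L}(f)\cap\mathcal{N}_\alpha\mathcal{B}$ then one may write
\[
gu_1\cdots u_n=\sum_{i=1}^n (u_1\cdots\widehat{u_i}\cdots u_n\,g_i)\,u_i\partial_{u_i}f,\qquad \nu(u_1\cdots\widehat{u_i}\cdots u_n\,g_i)\le\alpha-1.
\]
Dividing by $u_1\cdots u_n$ gives $g=\sum_i g_i\,\partial_{u_i}f\in\mathcal{I}(f)$ with the correct $W$-bound, so both the ungraded injectivity and the filtered compatibility come for free from this single input. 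Your write-up becomes correct once you replace the appeals to strictness of $\partial$ and to dimension-counting by this citation.
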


\begin{proof} 
Let $g\in\mathcal{B}$ such that $gu_1 \cdots u_n \in \mathcal{L}(f)\cap \mathcal{N}_{\alpha}\mathcal{B}\cap \mathcal{B}u_1 \cdots u_n$. Then, by  the proposition of \cite[Appendice 1]{BGMM}, there exists $g_1, \cdots , g_n \in\mathcal{B}$ such that 
\begin{equation}\label{eq:gu}
gu_1 \cdots u_n =u_1 \frac{\partial f}{\partial u_1} u_2 \cdots u_n g_1 +\cdots +
u_n \frac{\partial f}{\partial u_n} u_1 \cdots u_{n-1} g_n 
\end{equation}
and $\nu (u_2 \cdots u_n g_1 )\leq \alpha -1, \cdots , \nu (u_1 \cdots u_{n-1} g_n )\leq \alpha -1$.
Thus, the multiplication by $u_1 \cdots u_n$ induces 
$\lambda : \frac{\mathcal{B}}{\mathcal{I}}\rightarrow \frac{\mathcal{B}u_1 \cdots u_n}{\mathcal{L}(f)\cap \mathcal{B}u_1 \cdots u_n}$
and injective maps
\begin{equation}\nonumber
\lambda_{\alpha}:\frac{W_{\alpha}\mathcal{B}}{\mathcal{I}\cap W_{\alpha}\mathcal{B}+W_{<\alpha}\mathcal{B}}\rightarrow 
\frac{\mathcal{N}_{\alpha}\mathcal{B}\cap \mathcal{B}u_1 \cdots u_n}{\mathcal{L}(f)\cap\mathcal{N}_{\alpha}\mathcal{B}\cap \mathcal{B}u_1 \cdots u_n +\mathcal N_{<\alpha}\mathcal{B}\cap\mathcal{B}u_1 \cdots u_n}.
\end{equation}
By construction, these maps are onto.
\end{proof}

For $1\leq p\leq n$, let 
$I_p =\{ (i_1 ,\cdots ,i_p ), \ 1\leq i_1 <\cdots <i_p \leq n \}$
and let $P_{\gr^{\mathcal{N}}(\frac{\mathcal{B}}{\mathcal{L}})}^{(i_1 ,\cdots ,i_p )}(z)$ be the toric Newton spectrum of the restriction $f_{(i_1 ,\cdots ,i_p )}$ of $f$ at 
$u_{i_1}=\cdots =u_{i_p}=0$, with the convention $P_{\gr^{\mathcal{N}}(\frac{\mathcal{B}}{\mathcal{L}})}^{(1 ,\cdots , n )}(z)=1$. The restrictions $f_{(i_1 ,\cdots ,i_p )}$ are convenient and nondegenerate if $f$ is so.

\begin{proposition}\label{prop:SpectrumToricSpectrum}
Let $f$ be a convenient and nondegenerate polynomial with Newton polytope $P$. Then 
\begin{equation}\nonumber
 P_{B^*} (z)=
P_{\gr^{\mathcal{N}}(\frac{\mathcal{B}}{\mathcal{L}})}(z)+\sum_{p=1}^{n-1} (-1)^{p}
\sum_{(i_1 ,\cdots ,i_p )\in I_p}  P_{\gr^{\mathcal{N}}(\frac{\mathcal{B}}{\mathcal{L}})}^{(i_1 ,\cdots ,i_p )}(z)+(-1)^n .
\end{equation}
\end{proposition}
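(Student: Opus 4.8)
The plan is to relate the Newton spectrum $P_{B^*}(z)$ to the toric Newton spectrum via the isomorphisms $\lambda_\alpha$ of Lemma \ref{lemma:grWgrN}, and then to compute the "defect" $\gr_\alpha^{\mathcal N}\frac{\mathcal B}{\mathcal L(f)}$ versus $\gr_\alpha^{\mathcal N}\frac{\mathcal B u_1\cdots u_n+\mathcal L(f)}{\mathcal L(f)}$ by an inclusion–exclusion over the coordinate hyperplanes. First I would observe that by Lemma \ref{lemma:grWgrN}, $P_{B^*}(z)=\sum_\alpha \dim \gr_\alpha^{\mathcal N}\!\left(\frac{\mathcal B u_1\cdots u_n+\mathcal L(f)}{\mathcal L(f)}\right) z^\alpha$, which is a "sub-Hilbert-series" of $P_{\gr^{\mathcal N}(\mathcal B/\mathcal L)}(z)$. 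So the task is to express the quotient $\gr^{\mathcal N}\frac{\mathcal B}{\mathcal B u_1\cdots u_n+\mathcal L(f)}$ combinatorially.

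The key step is the following: since $f$ is convenient, the strictness of $\partial$ (already used repeatedly, via \cite[Th\'eor\`eme 4.1]{K}) lets us identify $\gr^{\mathcal N}\frac{\mathcal B}{\mathcal B u_1\cdots u_n+\mathcal L(f)}$ with the quotient of $B/F$ by the image of the class of $u_1\cdots u_n$. Concretely, a monomial $u^m$ survives in this quotient precisely when at least one coordinate $m_i$ vanishes, i.e. when $u^m$ lies on a coordinate hyperplane, modulo $F$. For a subset $S=\{i_1,\dots,i_p\}$ of indices, the monomials supported on $\{u_{i_1}=\cdots=u_{i_p}=0\}$ form (after passing to $\gr^{\mathcal N}$ and killing $\mathcal L$) exactly the space computing the toric Newton spectrum $P^{(i_1,\dots,i_p)}_{\gr^{\mathcal N}(\mathcal B/\mathcal L)}(z)$ of the restriction $f_{(i_1,\dots,i_p)}$: here one uses that $u_j\partial f/\partial u_j$ restricted to that coordinate subspace is $u_j \partial f_{(i_1,\dots,i_p)}/\partial u_j$ for the surviving indices, and that the Newton polytope of the restriction is the corresponding face of $P$, so the Newton functions match. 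Then the standard inclusion–exclusion identity for the lattice of coordinate hyperplanes gives
\begin{equation}\nonumber
\sum_\alpha \dim \gr_\alpha^{\mathcal N}\!\left(\frac{\mathcal B u_1\cdots u_n+\mathcal L(f)}{\mathcal L(f)}\right) z^\alpha
= \sum_{p=0}^{n}(-1)^p \sum_{(i_1,\dots,i_p)\in I_p} P^{(i_1,\dots,i_p)}_{\gr^{\mathcal N}(\mathcal B/\mathcal L)}(z),
\end{equation}
where the $p=0$ term is $P_{\gr^{\mathcal N}(\mathcal B/\mathcal L)}(z)$ and the $p=n$ term is $(-1)^n$ by the stated convention. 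Combining with Lemma \ref{lemma:grWgrN} yields the proposition.

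The main obstacle is making the inclusion–exclusion rigorous \emph{at the graded level}: one must check that the natural maps $\gr^{\mathcal N}\frac{\mathcal B}{\mathcal L}\big|_{\text{restriction to }S} \to \gr^{\mathcal N}\frac{\mathcal B}{\mathcal L}$ are compatible with intersections of coordinate subspaces and that taking $\gr^{\mathcal N}$ commutes with these restrictions — this is where strictness of $\partial$ (and of the restricted maps $\partial_{f_{(i_1,\dots,i_p)}}$, which holds since the restrictions are again convenient and nondegenerate) is essential, so that $\gr^{\mathcal N}\frac{\mathcal B}{\mathcal L}\cong B/F$ and the analogous statements for restrictions hold on the nose. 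I would phrase the bookkeeping in terms of the graded ring $B=\oplus_\alpha B_\alpha$ with the monomial basis $\{\delta_m\}$ and the product \eqref{eq:ProduitB}: then everything becomes an elementary identity about which $\delta_m$ with some $m_i=0$ survive modulo $F$, and the alternating sum over $S\subseteq\{1,\dots,n\}$ collapses by the binomial theorem applied coordinatewise. Once this is set up, the remaining computation is routine.
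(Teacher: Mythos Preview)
Your inclusion--exclusion step has a genuine error. You assert that the image of $\cit[u_j:j\notin S]$ in $\gr^{\mathcal N}(\mathcal B/\mathcal L)\cong B/F$ has Hilbert series equal to the toric Newton spectrum of $f_{(S)}$, justified by $(u_j\partial f/\partial u_j)\big|_{u_i=0,\,i\in S}=u_j\,\partial f_{(S)}/\partial u_j$. But that identity only gives $\mathcal L(f)\cap\cit[u_j:j\notin S]\subseteq\mathcal L(f_{(S)})$; the reverse inclusion fails, because $u_j\,\partial f_{(S)}/\partial u_j$ need not lie in $\mathcal L(f)$. Take $n=2$, $f=u_1^2+u_1u_2+u_2^2$: then $\mathcal L(f_{(2)})=(u_1^2)$ while $u_1^2\notin\mathcal L(f)$, so $[\delta_{(2,0)}]\neq 0$ in $(B/F)_1$ and the image of $B^{\{2\}}$ has Hilbert series $1+z^{1/2}+z$, strictly larger than the toric Newton spectrum $1+z^{1/2}$ of $u_1^2$. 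Worse, $(B/F)_1$ is one-dimensional and $F_1-F_2=2\delta_{(2,0)}-2\delta_{(0,2)}$ forces $[\delta_{(2,0)}]=[\delta_{(0,2)}]$, so the images of $B^{\{1\}}$ and $B^{\{2\}}$ coincide in degree~$1$ while the image of $B^{\{1,2\}}$ vanishes there. The lattice of images is not distributive, and your alternating sum returns $1-1-1+0=-1$ for the degree-$1$ coefficient of $P_{B^*}$. Passing to the monomial basis of $B$ does not help: the whole difficulty is that modding out by $F$ mixes the coordinate strata.

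The paper's argument is organised dually and avoids this trap. It works with \emph{open} strata via multiplication by $\prod_{j\notin S}u_j$: equation~(\ref{eq:Restriction}) (proved, like Lemma~\ref{lemma:grWgrN}, through the results of \cite{BGMM}) identifies the \emph{Newton} spectrum $P_{B^*}^{(S)}$ of $f_{(S)}$---not the toric one---with $\gr^{\mathcal N}$ of the image of $\prod_{j\notin S}u_j\cdot\cit[u_j:j\notin S]$ inside $\mathcal B/\mathcal L$. One then needs to know that these pieces fill up all of $B/F$, and here the paper invokes Kouchnirenko's alternating volume formula for $\mu_f$ as a global dimension count: this forces $P_{\gr^{\mathcal N}(\mathcal B/\mathcal L)}(z)=\sum_S P_{B^*}^{(S)}(z)$, after which M\"obius inversion on the Boolean lattice (phrased as an induction on $n$) gives the proposition. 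That dimension argument is precisely the missing ingredient in your sketch; without it there is no control on the overlaps that invalidate the naive inclusion--exclusion.
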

\begin{proof}  Assume that 
$gu_1 \cdots u_{n-1} \in\mathcal{L}(f)\cap \mathcal N_{\alpha}\cit [u_1 ,\cdots , u_{n}]u_1 \cdots u_{n-1}$.
Then, by the lemma of \cite[Appendice 1]{BGMM}, there exists $g_1, \cdots , g_n \in\mathcal{B}$ such that 
\begin{equation}\nonumber
gu_1 \cdots u_{n-1} =u_1 \frac{\partial f}{\partial u_1} u_2 \cdots u_{n-1} g_1 +\cdots +
u_{n-1} \frac{\partial f}{\partial u_{n-1}} u_1 \cdots u_{n-2} g_{n-1}+ 
u_n \frac{\partial f}{\partial u_n} u_1 \cdots u_{n-1} g_n .
\end{equation}
Hence
$g =\frac{\partial f}{\partial u_1} g_1 +\cdots +\frac{\partial f}{\partial u_{n-1}} g_{n-1}+
u_n \frac{\partial f}{\partial u_n}  g_n $ and
we deduce the isomorphisms (put $u_n =0$ in the previous formula),
induced by the multiplication by $u_1 \cdots u_{n-1}$ as in Lemma \ref{lemma:grWgrN}, 
\begin{equation}\label{eq:Restriction}
\gr_{\alpha}^W \frac{\cit [\underline{u}']}{(\frac{\partial f}{\partial u_1}( \underline{u}', 0), \cdots , \frac{\partial f}{\partial u_{n-1}}(\underline{u}', 0)) }\cong \gr_{\alpha}^{\mathcal N}\frac{\cit [u_1 ,\cdots , u_{n-1} ]u_1 \cdots u_{n-1}}{\mathcal{L}(f)\cap \cit [u_1 ,\cdots , u_{n-1} ]u_1 \cdots u_{n-1}}
\end{equation}
 where $\underline{u}'=(u_1 ,\cdots , u_{n-1})$.
 We also have 
$P_{B^*}(1)=\mu_f$ because the filtration $W_{\bullet}$ is exhaustive and finite on $\frac{\mathcal{B}}{\mathcal{I}}$. By \cite[Th\'eor\`eme 1.15]{K}, it follows that
\begin{equation}\nonumber
 P_{B^*} (1)= n! V_n -(n-1)! V_{n-1} +\cdots + (-1)^{n-1} V_1 +(-1)^n
\end{equation}
where, for $1\leq q\leq n-1$, $V_q$ is the sum of the $q$-dimensional volumes of the intersection of $P$ with the hyperplane coordinates of dimension $q$.  Because $P_{\gr^{\mathcal{N}}(\frac{\mathcal{B}}{\mathcal{L}})} (1)= n! V_n$,  
a dimension argument, Lemma \ref{lemma:grWgrN} and equation (\ref{eq:Restriction}) provide 
\begin{equation}\nonumber
 P_{\gr^{\mathcal{N}}(\frac{\mathcal{B}}{\mathcal{L}})}(z)=P_{B^*} (z)+\sum_{p=1}^n 
\sum_{(i_1 ,\cdots ,i_p )\in I_p}  P_{B^*}^{(i_1 ,\cdots ,i_p )}(z)
\end{equation}
where $P_{B^*}^{(i_1 ,\cdots ,i_p )}(z)$ is the Newton spectrum of $f_{(i_1 ,\cdots ,i_p )}$, and 
$P_{B^*}^{(1 ,\cdots , n )}(z)=1$. Now we are done by induction because $P_{B^*} (z)=P_{\gr^{\mathcal{N}}(\frac{\mathcal{B}}{\mathcal{L}})}(z)-1$ if $n=1$ by Proposition \ref{prop:ToricSpectrumBox} and Lemma 
\ref{lemma:grWgrN}. 
\end{proof}

\begin{remark} Combining Proposition \ref{prop:ToricSpectrumBox} and Proposition \ref{prop:SpectrumToricSpectrum}
we get \cite[Theorem 5.16]{MSTakeu}.
\end{remark}

\begin{example} \label{ex:ToricNewton}
In the following examples, the toric Newton spectrum is computed using Proposition \ref{prop:ToricSpectrumBox}
and the Newton spectrum is computed using Proposition \ref{prop:SpectrumToricSpectrum}.

\begin{enumerate}

\item Let $f(u,v)=u^2 +u^2 v^2 +v^2$. Then $\mu_P =8$ and $\mu_f =5$. We get 
$$P_{\gr^{\mathcal{N}}(\frac{\mathcal{B}}{\mathcal{L}})} (z)=1+3z+3z^{1/2} +z^{3/2}$$
and
$$P_{B^*} (z)=P_{\gr^{\mathcal{N}}(\frac{\mathcal{B}}{\mathcal{L}})} (z)- (1+z^{1/2})-(1+z^{1/2}) +1=z^{1/2} +3z+z^{3/2}.$$

\begin{center}
\begin{tikzpicture}
\draw[dashed,color=red] (0,0) grid (4,4);    
   \draw[domain=0:2] plot (\x,{2}) node[above right]{};
\draw[domain=0:2] plot ({2}, \x) node[above right]{};
\draw[domain=2:4][dashed] plot (\x,{2}) node[above right]{};
\draw[domain=2:4][dashed] plot ({2}, \x) node[above right]{};
 \draw[domain=0:4][dashed] plot (\x,\x) node[below right]{};
 \draw[domain=0:2][dashed] plot (\x,{2+\x}) node[below right]{}; 
\draw[domain=2:4] [dashed] plot (\x , {-2+\x}) node[below right]{}; 
\draw[domain=0:2][dashed] plot (\x,0) node[above right]{};
\draw[domain=0:2][dashed] plot (0,\x) node[above right]{};
\draw[fill=black] (0,0) circle (0.15em);
\draw[fill=black] (0,1) circle (0.15em);
\draw[fill=black] (1,0) circle (0.15em);
\draw[fill=black] (1,1) circle (0.15em);
\draw[fill=black] (2,1) circle (0.15em);
\draw[fill=black] (2,2) circle (0.15em);
\draw[fill=black] (1,2) circle (0.15em);
\draw[fill=black] (3,3) circle (0.15em);
\draw (0,0) node[anchor=north east] {O};
\node[below right] at (0,0) {The toric Newton spectrum of $f$};
   \end{tikzpicture}
\begin{tikzpicture}
   \draw[dashed,color=red] (0,0) grid (4,4);    
   \draw[domain=0:2] plot (\x,{2}) node[above right]{};
\draw[domain=0:2] plot ({2}, \x) node[above right]{};
\draw[domain=2:4][dashed] plot (\x,{2}) node[above right]{};
\draw[domain=2:4][dashed] plot ({2}, \x) node[above right]{};
 \draw[domain=0:4][dashed] plot (\x,\x) node[below right]{};
 \draw[domain=0:2][dashed] plot (\x,{2+\x}) node[below right]{}; 
\draw[domain=2:4] [dashed] plot (\x , {-2+\x}) node[below right]{}; 
\draw[domain=0:2][dashed] plot (\x,0) node[above right]{};
\draw[domain=0:2][dashed] plot (0,\x) node[above right]{};
\draw[fill=black] (1,1) circle (0.15em);
\draw[fill=black] (2,1) circle (0.15em);
\draw[fill=black] (2,2) circle (0.15em);
\draw[fill=black] (1,2) circle (0.15em);
\draw[fill=black] (3,3) circle (0.15em);
\draw (0,0) node[anchor=north east] {O};
\node[below right] at (0,0) {The Newton spectrum of $f$};
   \end{tikzpicture}
\end{center}

\item  Let $f(u,v, w)=u+v+w+u^2 v^2 w^2 +v^2 w^2$. Then $\mu_P =12$ 
and $\mu_f =8$. We first analyze the contributions of 
$v\in\Box (\Delta )$, $\Delta\in\mathcal{F}(P)$:
\begin{itemize}
\item  the contribution of $v=(0,0,0)$ is 
$4+4(z-1)+(z-1)^2 =z^2 +2z +1$,
\item the contribution of $v=(1,1,1)$ is  
$(4+4(z-1)+(z-1)^2 )z^{1/2} =(z^2 +2z +1) z^{1/2}$,
\item the contribution of $v=(1,2,2)$ is
$(2+(z-1))z =(z+1)z =z+z^2$,
\item the contribution of $v=(0,1,1)$ is 
$(2+(z-1))z^{1/2} =(z+1)z^{1/2} =z^{1/2}+z^{3/2}$.
\end{itemize}
\noindent Thus,
$$P_{\gr^{\mathcal{N}}(\frac{\mathcal{B}}{\mathcal{L}})} (z)=1+2z^{1/2}+3z^{3/2}+3z+2z^2+z^{5/2}$$
and
$$P_{B^*}(z) =P_{\gr^{\mathcal{N}}(\frac{\mathcal{B}}{\mathcal{L}})} (z)-(3+z^{1/2}+z+z^{3/2})+3-1=z^{1/2}+2z^{3/2} +2z+2z^2 + z^{5/2}.$$

\end{enumerate}

\end{example}

\section{Applications}

In this section, $f$ is a convenient and nondegenerate polynomial defined on $\cit^n$ and $P$ denotes its Newton polytope. 
Let $\Sigma $ be the fan in $\rit^n$ obtained by taking the cones over the faces of the Newton boundary of $P$ and let $X_{\Sigma}$ be the toric variety associated with the fan 
$\Sigma$. 
We will call $\Sigma $ the {\em fan of $P$} and $X_{\Sigma}$ the {\em toric variety of $P$}.
We will denote by $\Sigma^*$ the set of the cones of $\Sigma$ {\em not contained in the hyperplane coordinates}.

\subsection{Toric Newton spectrum and orbifold cohomology}

\label{sec:NewtonAndOrbifold}

We will use the following version of Hodge-Deligne polynomials.
 Let $v\in \nit^n$ and let $\sigma (v)$ be the smallest cone of $\Sigma$ containing $v$. We will call
\begin{equation}\nonumber
 E_{v} (z):=\sum_{\tau \in \Sigma , \sigma (v)\subseteq \tau } (z-1)^{n-\dim\tau}
\end{equation}
the {\em Hodge-Deligne polynomial of $v$} and
\begin{equation}\nonumber
 E_{v}^* (z):=\sum_{\tau \in \Sigma^* , \sigma (v)\subseteq \tau } (z-1)^{n-\dim\tau}
\end{equation}
the {\em relative Hodge-Deligne polynomial of $v$}.
The case $v=0$ deserves a special attention: we will call $E_{0} (z)$ the Hodge-Deligne polynomial of $X_{\Sigma}$ and $E_{0}^* (z)$ the relative Hodge-Deligne polynomial of $X_{\Sigma}$.

\begin{proposition} \label{prop:DualityHodgeDeligne} 
Let $f$ be a convenient and nondegenerate polynomial  on $\cit^n$ and let $P$ be its Newton polytope.  
Assume that $\Sigma$ is simplicial. Then:
\begin{enumerate}
\item $X_{\Sigma}$ has no odd cohomolgy and
$E_{0}(z)=\sum_{k=0}^n \dim H^{2k}_c (X_{\Sigma}, \qit)z^k$,
\item $z^n E_{0} (z^{-1})= E_{0}^* (z)$,
\item $E_{0}^* (z)=\sum_{i=0}^{n} \dim H^{2i} (X_{\Sigma},\qit )z^i$.
\end{enumerate}
\end{proposition}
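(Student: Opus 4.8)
The plan is to treat the three assertions in order, using the combinatorics of the simplicial fan $\Sigma$ together with the standard description of the (compactly supported) cohomology of the toric variety $X_\Sigma$.

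\textbf{Assertion (1).} First I would recall that since $\Sigma$ is simplicial, $X_\Sigma$ is an orbifold, and its rational cohomology (and compactly supported cohomology) is concentrated in even degrees and is combinatorially computed: $X_\Sigma$ has a stratification by torus orbits $O_\tau$, one for each cone $\tau\in\Sigma$, with $O_\tau\cong(\cit^*)^{n-\dim\tau}$, so $H^*_c(O_\tau)$ contributes $(z-1)^{n-\dim\tau}$ to the $E$-polynomial in the usual way. Summing over all cones gives $E_0(z)=\sum_{\tau\in\Sigma}(z-1)^{n-\dim\tau}$, which after expanding is a polynomial with nonnegative coefficients equal to $\sum_k \dim H^{2k}_c(X_\Sigma,\qit)z^k$; the vanishing of odd cohomology is then exactly the statement that the weight/Hodge spectral sequence degenerates, which for simplicial toric varieties is classical (it follows from the fact that such $X_\Sigma$ has a cellular-type decomposition, or from purity). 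I would simply cite the relevant facts about toric varieties (e.g. Fulton, or Cox–Little–Schenck) rather than reprove them.

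\textbf{Assertion (2).} This is the combinatorial heart. Unwinding the definitions,
$$
z^n E_0(z^{-1}) = \sum_{\tau\in\Sigma} z^n (z^{-1}-1)^{n-\dim\tau} = \sum_{\tau\in\Sigma} z^{\dim\tau}(1-z)^{n-\dim\tau}(-1)^{n-\dim\tau} = \sum_{\tau\in\Sigma} z^{\dim\tau}(z-1)^{n-\dim\tau},
$$
while $E_0^*(z)=\sum_{\tau\in\Sigma^*}(z-1)^{n-\dim\tau}$. So the claim is the polynomial identity
$$
\sum_{\tau\in\Sigma} z^{\dim\tau}(z-1)^{n-\dim\tau} = \sum_{\tau\in\Sigma^*}(z-1)^{n-\dim\tau}.
$$
The way I would prove this is to compare coefficients via the face structure: group the left-hand sum by writing $z^{\dim\tau}=((z-1)+1)^{\dim\tau}$ and expanding, so the left side becomes $\sum_\tau \sum_{j=0}^{\dim\tau}\binom{\dim\tau}{j}(z-1)^{n-\dim\tau+j}$; reindexing by the face $\sigma\subseteq\tau$ with $\dim\sigma=\dim\tau-j$ and using that in a simplicial cone the number of $j$-codimensional faces is $\binom{\dim\tau}{j}$, the double sum collapses to $\sum_{\sigma\le\tau,\ \sigma\in\Sigma}(z-1)^{n-\dim\sigma}$, i.e. each cone $\sigma$ is counted once for every $\tau\in\Sigma$ with $\sigma\subseteq\tau$. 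Thus the left side equals $\sum_{\sigma\in\Sigma}(z-1)^{n-\dim\sigma}\cdot\#\{\tau\in\Sigma:\sigma\subseteq\tau\}$. The matching claim is then that this multiplicity, weighted appropriately, picks out exactly the cones in $\Sigma^*$ with multiplicity one — here I would use the fact that $\Sigma$ is a \emph{complete} fan when restricted suitably, or rather use the coordinate-hyperplane decomposition: $\Sigma$ is built from the faces of the Newton boundary of $P$ (a convenient polytope), so the link of each face is a sphere/ball as appropriate, and an Euler-characteristic (shelling) argument shows $\#\{\tau\supseteq\sigma\}$ reduces modulo the relation $(z-1)$ to $1$ precisely when $\sigma\in\Sigma^*$ and to $0$ otherwise. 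Concretely I expect the cleanest route is: observe that $\sum_{\tau\supseteq\sigma}(-1)^{\dim\tau-\dim\sigma}$ is $(-1)^{n-\dim\sigma}$ times the reduced Euler characteristic of the link, which for a convenient Newton polytope is a ball (contributing $0$) when $\sigma$ lies in a coordinate hyperplane and a sphere (contributing the right sign) otherwise — this is the local duality that makes $X_\Sigma$ behave like a smooth complete variety away from the toric boundary coming from coordinate hyperplanes. I expect \textbf{this duality computation to be the main obstacle}: getting the bookkeeping of "cones in coordinate hyperplanes" versus "cones in $\Sigma^*$" exactly right, and justifying the sphere-versus-ball dichotomy for links in the Newton boundary of a convenient polytope.

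\textbf{Assertion (3).} Given (1) and (2), this is immediate: $X_\Sigma$ is a simplicial (hence rationally smooth) toric variety, so by Poincaré duality over $\qit$ we have $H^{2i}(X_\Sigma,\qit)\cong H^{2(n-i)}_c(X_\Sigma,\qit)$ (the complex dimension being $n$). Hence $\sum_i \dim H^{2i}(X_\Sigma,\qit)z^i = \sum_i \dim H^{2(n-i)}_c(X_\Sigma,\qit)z^i = z^n\sum_k\dim H^{2k}_c(X_\Sigma,\qit)z^{-k} = z^n E_0(z^{-1})$ by (1), and this equals $E_0^*(z)$ by (2). I would add one line noting that Poincaré duality in this form for rationally smooth (e.g. simplicial toric, or more generally $\qit$-homology manifold) spaces is standard, citing the same references as in (1).
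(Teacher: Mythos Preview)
Your argument for (1) has a real gap. The assertion that a simplicial toric variety has no odd compactly supported cohomology is \emph{false} without a completeness hypothesis: $X_\Sigma=\cit^*$ (the fan $\{0\}$ in $\rit$) is simplicial yet $H^1_c(\cit^*,\qit)\neq 0$. More generally, purity of $H^{2k}_c$ and vanishing of odd $H_c$ are not ``classical'' for non-complete simplicial toric varieties, so a bare citation to Fulton or Cox--Little--Schenck does not suffice. What makes the proposition true is the special shape of $\Sigma$ here (its support is all of $\rit^n_{\geq 0}$, by convenience). The paper exploits this via a completion trick, following Stapledon: one adds the ray $\rho$ through $(-1,\dots,-1)$ together with the cones spanned by $\rho$ and the cones of $\Sigma$ lying in coordinate hyperplanes, obtaining a \emph{complete} simplicial fan $\Sigma'$ with $X_\Sigma$ open in $X_{\Sigma'}$ and complement $X_{\Sigma'_\rho}$ (complete simplicial of dimension $n-1$). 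The long exact sequence in $H^*_c$ then reduces (1) to the complete case, where the standard references do apply.

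Your route to (2) is genuinely different from the paper's and, as written, carries a sign slip: $z^n(z^{-1}-1)^{n-\dim\tau}=z^{\dim\tau}(1-z)^{n-\dim\tau}$, not $z^{\dim\tau}(z-1)^{n-\dim\tau}$ (check $n=1$: your formula gives $2z-1$, while $E_0^*(z)=1$). The sphere/ball dichotomy you anticipate for links is essentially a repackaging of the same completion; the paper avoids the combinatorics entirely by applying Poincar\'e duality directly to the complete simplicial varieties $X_{\Sigma'}$ and $X_{\Sigma'_\rho}$, obtaining $z^nE_0^{\Sigma'}(z^{-1})=E_0^{\Sigma'}(z)$ and $z^{n-1}E_0^{\Sigma'_\rho}(z^{-1})=E_0^{\Sigma'_\rho}(z)$, from which $z^nE_0(z^{-1})=E_0(z)-(z-1)E_0^{\Sigma'_\rho}(z)=E_0^*(z)$ falls out in one line. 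This is both shorter and sidesteps the bookkeeping you flag as the main obstacle. Your (3) matches the paper's.
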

\begin{proof}
For the first point we follow closely \cite[Lemma 4.1]{Stapledon}: let us denote by $\rho$ the ray in $-|\Sigma |$ 
whose primitive generator is $v_{\rho}= (-1, \cdots ,-1)\in\zit^n$
and let $\Sigma '$ ({\em resp.} $\Sigma'_{\rho}$) be the fan given by the cones of $\Sigma$ and the cones generated by $\rho$ and the cones of $\Sigma$ contained in the hyperplane coordinates ({\em resp.} be the quotient fan $\Sigma '/ \rho$). 
By the proof of {\em loc. cit.}, we have an exact sequence of cohomology with compact support
\begin{equation}\nonumber
0\longrightarrow H^{2i}_c (X_{\Sigma}, \qit )\longrightarrow H^{2i}_c (X_{\Sigma'}, \qit )\longrightarrow H^{2i}_c (X_{\Sigma'_{\rho}}, \qit )\longrightarrow 0.
\end{equation}
Because $X_{\Sigma'}$ and $X_{\Sigma'_{\rho}}$ are complete and simplicial, the first point holds for $X_{\Sigma'}$ and $X_{\Sigma'_{\rho}}$
(see for instance \cite[section 14]{Da}) and we get  
\begin{equation}\label{eq:Intermediaire}
\sum_{k=0}^n \dim H^{2k}_c (X_{\Sigma}, \qit)z^k =E_{0}^{\Sigma'}(z)-E_{0}^{\Sigma'_{\rho}}(z)
\end{equation}
\begin{equation}\nonumber
=\sum_{\tau\in\Sigma '}(z-1)^{n-\dim \tau}
-\sum_{\tau\in\Sigma', \rho\subseteq \tau}(z-1)^{n-\dim \tau}
=\sum_{\tau\in\Sigma}(z-1)^{n-\dim \tau}=E_{0}(z)
\end{equation}
where $E_{0}^{\Sigma'}(z)$ (resp. $E_{0}^{\Sigma'_{\rho}}(z)$) denotes the Hodge-Deligne polynomial of 
$X_{\Sigma'}$ (resp.  $X_{\Sigma'_{\rho}}$). 
In order to get the second equality, we have used two facts: first, the orbit closure $V(\rho ) =\cup_{\rho\subseteq \tau} O (\tau )$ of the orbit $O(\rho)$ and the toric variety $X_{\Sigma /\rho}$ are isomorphic, see for instance \cite[Proposition 3.2.7]{CLS}; second, the Hodge-Deligne polynomial of $\cit^*$ is $z-1$ and 
$O(\tau )\cong (\cit^* )^{n-\dim \tau }$, see \cite[Section 3.2]{CLS}.
This shows the first point. 

By Poincar\'e duality for the complete and simplicial toric varieties $X_{\Sigma'}$ and $X_{\Sigma'_{\rho}}$ we have 
$z^n E_{0}^{\Sigma'}(z^{-1})=E_{0}^{\Sigma'}(z)$ and $z^{n-1} E_{0}^{\Sigma'_{\rho}}(z^{-1})=E_{0}^{\Sigma'_{\rho}}(z)$.  
Using (\ref{eq:Intermediaire}), we get 
$z^n E_{0} (z^{-1})-E_{0} (z)=(1-z) E_{0}^{\Sigma'_{\rho}}(z)$
thus $z^n E_{0} (z^{-1})=E_{0} (z)-(z-1) E_{0}^{\Sigma'_{\rho}}(z)=E_{0}^* (z)$.

Last, the third point follows from Proposition \ref{prop:DualityHodgeDeligne} and Poincar\'e duality for the simplicial toric variety $X_{\Sigma}$.
\end{proof}

We now express the toric Newton spectrum of $f$ in terms of Hilbert-Poincar\'e series of toric varieties.
Let $b_1 ,\cdots ,b_r \in \mathcal{V}(P)$ be the vertices of $P$ different from the origin.
Let $\Box (P):=\cup_{\Delta\in\mathcal{F}(P)}\Box (\Delta )$
where $\Box (\Delta )$ is defined by (\ref{eq:Box})
and $\mathcal{F} (P)$ denotes the set of the closed faces of $P$ not contained in the union of the hyperplane coordinates.

\begin{theorem}\label{theo:SpectrumToricVariety}
Assume that $\Sigma$ is simplicial. Then
$P_{\gr^{\mathcal{N}}(\frac{\mathcal{B}}{\mathcal{L}})} (z)=\sum_{v\in \Box (P)\cap \nit^n}E_{v}^* (z) z^{\nu (v)}$.
\end{theorem}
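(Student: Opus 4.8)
The plan is to start from the combinatorial formula for the toric Newton spectrum already established in Proposition \ref{prop:ToricSpectrumBox} and to reorganize the sum so that the weight $z^{\nu(v)}$ is attached to a single lattice point $v\in\Box(P)\cap\nit^n$, the combinatorial coefficient multiplying it being the relative Hodge-Deligne polynomial $E_v^*(z)$. Concretely, Proposition \ref{prop:ToricSpectrumBox} reads
\begin{equation}\nonumber
P_{\gr^{\mathcal{N}}(\frac{\mathcal{B}}{\mathcal{L}})}(z)=\sum_{q=0}^{n-1}\sum_{\dim\Delta=q}(z-1)^{n-1-q}\sum_{v\in\Box(\Delta)\cap\nit^n}z^{\nu(v)},
\end{equation}
the inner sums being over $\Delta\in\mathcal{F}(P)$. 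First I would exchange the order of summation, grouping by the lattice point $v$; the coefficient of $z^{\nu(v)}$ then becomes $\sum_{\Delta}(z-1)^{n-1-q}$, the sum running over all faces $\Delta\in\mathcal{F}(P)$ of dimension $q$ such that $v\in\Box(\Delta)$.

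The key geometric observation to exploit is the bijection between faces $\Delta$ of the Newton boundary and cones $\tau=\Cone(\Delta)$ of $\Sigma$, with $\dim\tau=\dim\Delta+1$. Thus a face $\Delta$ of dimension $q$ corresponds to a cone $\tau$ of dimension $q+1$, and the exponent $n-1-q$ equals $n-\dim\tau$. So I need to check that $v\in\Box(\Delta)$ holds exactly when $\sigma(v)\subseteq\Cone(\Delta)$, where $\sigma(v)$ is the smallest cone of $\Sigma$ containing $v$; here the simpliciality of $\Sigma$ (equivalently, of each face in $\mathcal{F}(P)$) is what makes $\Box(\Delta)$ well-behaved, namely a half-open parallelepiped spanned by the vertices of $\Delta$. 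Granting this equivalence, the coefficient of $z^{\nu(v)}$ becomes precisely
\begin{equation}\nonumber
\sum_{\tau\in\Sigma^*,\ \sigma(v)\subseteq\tau}(z-1)^{n-\dim\tau}=E_v^*(z),
\end{equation}
using that the faces in $\mathcal{F}(P)$ correspond exactly to the cones in $\Sigma^*$ (those not contained in the hyperplane coordinates). Note that every $v\in\Box(P)\cap\nit^n$ lies in $\Box(\Delta)$ for at least one $\Delta\in\mathcal{F}(P)$, so $\sigma(v)\in\Sigma^*$ and $E_v^*(z)$ is genuinely the right coefficient; in particular $v=0$ contributes $E_0^*(z)$, the relative Hodge-Deligne polynomial of $X_\Sigma$.

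The step I expect to be the main obstacle is the clean verification of the equivalence $v\in\Box(\Delta)\iff\sigma(v)\subseteq\Cone(\Delta)$, together with the bookkeeping that no lattice point is counted for a face it does not belong to. The subtlety is that a single lattice point $v$ can lie in $\Box(\Delta)$ for several faces $\Delta$ simultaneously (all faces $\Delta$ whose cone contains $\sigma(v)$), and one must be sure that the set of such $\Delta$ is exactly $\{\Delta:\sigma(v)\subseteq\Cone(\Delta)\}$ and not something larger or smaller; this uses that when $\Sigma$ is simplicial, writing $v=\sum q_\ell b_{i_\ell}$ with the $b_{i_\ell}$ the vertices of the minimal face $\Delta_0$ with $v\in\Box(\Delta_0)$ forces all $q_\ell\in\,]0,1[$ strictly (none vanish, by minimality), and then $v\in\Box(\Delta)$ for a larger face $\Delta\supseteq\Delta_0$ precisely because the remaining coordinates can be taken to be $0$. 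Once this dictionary between the half-open boxes and the cone containments is in place, the theorem follows by simply substituting $E_v^*(z)$ for the reorganized inner sum in the formula of Proposition \ref{prop:ToricSpectrumBox}.
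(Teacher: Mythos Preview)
Your proposal is correct and is exactly the approach the paper takes: the paper's proof is a single sentence, ``Keeping in mind the definition of $E_{v}^* (z)$, this is precisely Proposition \ref{prop:ToricSpectrumBox},'' and what you have written is a careful unpacking of that sentence. Your identification of the dictionary $\Delta\leftrightarrow\tau=\Cone(\Delta)$, $\dim\tau=\dim\Delta+1$, together with the equivalence $v\in\Box(\Delta)\iff\sigma(v)\subseteq\Cone(\Delta)$ for $v\in\Box(P)$, is precisely the content the paper leaves implicit.
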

\begin{proof} Keeping in mind the definition of $E_{v}^* (z)$, this is precisely Proposition \ref{prop:ToricSpectrumBox}. 
\end{proof}

 \begin{remark} \label{rem:IntegralShifts}
Theorem \ref{theo:SpectrumToricVariety} gives a combinatorial explaination of integral shifts in the toric Newton spectrum: if $v\in \Box (P)\cap N$ and $v$ doesn't belong to the hyperplane coordinates, the sequence 
$\nu (v) , \nu (v)+1 ,\cdots , \nu (v)+n-\dim \sigma (v)$ is a part of the toric Newton spectrum. 
\end{remark}

Let $\mathcal{V} (P)$ be the set of the vertices of $P$ different from the origin.
If $X_{\Sigma}$ is simplicial, one associates to the triple $\mathbf{\Sigma}:=(\zit^n, \Sigma , \mathcal{V} (P))$ a Deligne-Mumford stack $\mathcal{X}$ whose 
coarse moduli space is $X_{\Sigma}$, see \cite{BCS}. We now give a geometric description of the toric Newton spectrum in terms of the orbifold cohomology of $\mathcal{X}$, as defined in \cite[Definition 4.8]{ALR}.

\begin{corollary}\label{coro:ToricSpectraOrbifoldCohomology} 
Let $f$ be a convenient and nondegenerate polynomial on $\cit^n$. 
 Assume that $X_{\Sigma}$ is simplicial. Then 
$P_{\gr^{\mathcal{N}}(\frac{\mathcal{B}}{\mathcal{L}})}(z)=\sum_{\alpha\in\qit} \dim_{\qit} H^{2\alpha }_{\orb} (\mathcal{X}, \qit ) z^{\alpha}$.
\end{corollary}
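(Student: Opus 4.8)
The plan is to combine Theorem \ref{theo:SpectrumToricVariety} with the Chen--Ruan description of orbifold cohomology for the stack $\mathcal{X}$ associated to the stacky fan $\mathbf{\Sigma}$. Recall from \cite{BCS} that the inertia stack of $\mathcal{X}$ decomposes into twisted sectors indexed precisely by the elements $v$ of $\boite(P)\cap\nit^n$ (these are the ``box elements'' $\mathrm{Box}(\mathbf{\Sigma})$ in the terminology of \cite{BCS}): each such $v$ lies in the relative interior of a unique cone $\sigma(v)\in\Sigma$, and the corresponding twisted sector is the closed toric substack $V(\sigma(v))$ of $\mathcal{X}$, whose coarse space is the toric variety of the quotient fan $\mathrm{Star}(\sigma(v))$ (a complete simplicial toric variety of dimension $n-\dim\sigma(v)$). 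The element $v=0$ gives the untwisted sector $\mathcal{X}$ itself, with age $0$.

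First I would recall that, by definition \cite[Definition 4.8]{ALR}, $H^{2\alpha}_{\orb}(\mathcal{X},\qit)=\bigoplus_{v}H^{2(\alpha-\iota(v))}(V(\sigma(v)),\qit)$, where the sum runs over box elements $v$ and $\iota(v)$ is the age (degree-shifting number) of the twisted sector attached to $v$. Grouping by sector, the generating series becomes
\begin{equation}\nonumber
\sum_{\alpha\in\qit}\dim_{\qit}H^{2\alpha}_{\orb}(\mathcal{X},\qit)\,z^{\alpha}
=\sum_{v\in\boite(P)\cap\nit^n} z^{\iota(v)}\sum_{i\ge 0}\dim_{\qit}H^{2i}(V(\sigma(v)),\qit)\,z^{i}.
\end{equation}
The second step is to identify the two ingredients of each summand with the corresponding ingredients in Theorem \ref{theo:SpectrumToricVariety}. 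For the Poincar\'e polynomial: the coarse space of $V(\sigma(v))$ is the complete simplicial toric variety with fan $\mathrm{Star}(\sigma(v))$, whose cones are in bijection with the cones $\tau\in\Sigma$ containing $\sigma(v)$, with $\dim(\tau/\sigma(v))=\dim\tau-\dim\sigma(v)$; applying point (3) of Proposition \ref{prop:DualityHodgeDeligne} (or directly the standard formula for a complete simplicial fan) gives $\sum_i\dim H^{2i}(V(\sigma(v)),\qit)z^i=\sum_{\tau\in\Sigma,\ \sigma(v)\subseteq\tau}(z-1)^{n-\dim\tau}$. Since $v\ne 0$ lies in the relative interior of $\sigma(v)$, no such $\tau$ can lie in a coordinate hyperplane, so this sum equals $E_v^*(z)$; and for $v=0$ it is $E_0^*(z)$ by point (3) of Proposition \ref{prop:DualityHodgeDeligne} directly. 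For the age: one checks $\iota(v)=\nu(v)$. This is the computation of the Chen--Ruan age for the toric stack $\mathcal{X}$ in the explicit coordinates of \cite{BCS}: writing $v=\sum_{\ell}q_\ell\, b_{i_\ell}$ with $q_\ell\in[0,1[$ in terms of the generators $b_{i_\ell}$ of $\sigma(v)$ coming from $\mathcal{V}(P)$, the age is $\sum_\ell q_\ell$; and because each $b_{i_\ell}$ lies on the facet-defining hyperplane $\langle u_F,\cdot\rangle=1$ for every facet $F\supseteq\sigma(v)$, one gets $\langle u_F,v\rangle=\sum_\ell q_\ell$ for such $F$, hence $\nu(v)=\sum_\ell q_\ell=\iota(v)$.

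Substituting these identifications, the orbifold series becomes $\sum_{v\in\boite(P)\cap\nit^n}E_v^*(z)\,z^{\nu(v)}$, which is exactly the right-hand side of Theorem \ref{theo:SpectrumToricVariety}, yielding the claimed equality with $P_{\gr^{\mathcal{N}}(\frac{\mathcal{B}}{\mathcal{L}})}(z)$. The main obstacle, and the only genuinely non-formal step, is the age computation $\iota(v)=\nu(v)$: one must carefully match the convention for the degree-shifting number in \cite[Definition 4.8]{ALR} / \cite{CR} with the combinatorial recipe for toric Deligne--Mumford stacks in \cite{BCS}, in particular verifying that the distinguished lattice generators used to define the stack structure are precisely the vertices in $\mathcal{V}(P)$ (so that the fractional parts $q_\ell$ are the same ones appearing in the definition \eqref{eq:Box} of $\boite(\Delta)$), and that $\nu$ restricted to a cone is the linear functional dual to those generators. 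Once this dictionary is pinned down, everything else is bookkeeping with Proposition \ref{prop:DualityHodgeDeligne} and Theorem \ref{theo:SpectrumToricVariety}; I would also remark that the simpliciality hypothesis on $\Sigma$ is exactly what makes $\mathcal{X}$ a Deligne--Mumford stack with the stated twisted-sector decomposition and makes Proposition \ref{prop:DualityHodgeDeligne} applicable to each $V(\sigma(v))$.
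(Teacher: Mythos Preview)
Your overall strategy coincides with the paper's: decompose the orbifold cohomology into twisted sectors indexed by $\boite(P)\cap\nit^n$, identify the age with $\nu(v)$, identify the sector Poincar\'e polynomial with $E_v^*(z)$, and invoke Theorem \ref{theo:SpectrumToricVariety}. The age computation $\iota(v)=\nu(v)$ is fine.

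There is, however, a genuine gap in your identification of the sector Poincar\'e polynomial. You assert that for $v\neq 0$ the orbit closure $V(\sigma(v))$ is a \emph{complete} simplicial toric variety, and that ``since $v\neq 0$ lies in the relative interior of $\sigma(v)$, no such $\tau$ can lie in a coordinate hyperplane''. Both claims are false in the present setting. The fan $\Sigma$ has support $\rit_{+}^{n}$, so $X_{\Sigma}$ is not complete, and neither are its orbit closures in general. Concretely, an element $v\in\boite(\Delta)\cap\nit^n$ with $\Delta\in\mathcal{F}(P)$ can very well sit on a coordinate hyperplane: take $f(u,v)=u^2+u^2v^2+v^2$ from Example \ref{ex:ToricNewton}(1); then $(1,0)\in\boite([(2,0),(2,2)])$, yet $\sigma((1,0))=\rit_{+}(1,0)$ is a coordinate ray, the quotient fan $\Sigma/\sigma((1,0))$ is a single ray (not complete), and $X_{\Sigma/\sigma((1,0))}\cong\cit$. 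Your formula $\sum_{\tau\supseteq\sigma(v)}(z-1)^{n-\dim\tau}$ would give $z$ here, whereas the correct Poincar\'e polynomial of $\cit$ is $1=E_{(1,0)}^{*}(z)$.

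What you need is the statement $\sum_k \dim H^{2k}(X_{\Sigma/\sigma(v)},\qit)\,z^k=E_v^*(z)$ for each $v$, without assuming completeness. This is exactly what the paper extracts from the \emph{proof} of Proposition \ref{prop:DualityHodgeDeligne}: the exact sequence with compact supports and Poincar\'e duality used there for $X_{\Sigma}$ apply verbatim to each quotient fan $\Sigma/\sigma(v)$, yielding part (3) (ordinary cohomology $=$ relative Hodge--Deligne polynomial) in that generality. Once you replace your ``complete simplicial'' shortcut by this argument, your proof becomes correct and matches the paper's.
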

\begin{proof}
For $v\in\nit^n$, 
let $X_{\Sigma /\sigma (v)}$ be the toric variety associated with the quotient fan $\Sigma/\sigma (v)$ (recall that
$\sigma (v)$ be the smallest cone of $\Sigma$ containing $v$).
By Theorem \ref{theo:SpectrumToricVariety}
and the proof of Proposition \ref{prop:DualityHodgeDeligne}, 
we have 
$$P_{\gr^{\mathcal{N}}(\frac{\mathcal{B}}{\mathcal{L}})}(z)=
\sum_{v\in \Box (P)\cap N} \sum_k \dim_{\qit} H^{2k}(X_{\Sigma/ \sigma (v)}, \qit)z^{k+\nu (v)}$$
$$=
\sum_{\alpha\in\qit}\sum_{v\in \Box (P)\cap N} \dim_{\qit} H^{2(\alpha -\nu (v))}(X_{\Sigma/ \sigma (v)}, \qit)z^{\alpha}$$
by Theorem \ref{theo:SpectrumToricVariety} and Proposition \ref{prop:DualityHodgeDeligne}. 
And $H^{2\alpha}_{\orb} (\mathcal{X}, \qit )=\oplus_{v\in \Box (P)\cap N}H^{2(\alpha -\nu (v))}(X_{\Sigma/ \sigma (v)}, \qit)$ by \cite[Proposition 4.7]{BCS}.
\end{proof}

\begin{remark}\label{rem:OrbiStap}
By Theorem \ref{theo:ToricSpectrumSeries}, the Hilbert-Poincar\'e series $P_{\gr^{\mathcal{N}}(\frac{\mathcal{B}}{\mathcal{L}})}(z)$ is equal to the weighted $\delta$-vector 
denoted by $\delta^0 (t)$ in \cite{Stapledon} and Corollary \ref{coro:ToricSpectraOrbifoldCohomology} could also be deduced from \cite[Theorem 4.3]{Stapledon}. However, we thought that is was useful to give a proof based on Kouchnirenko's work.
\end{remark}

\subsection{Toric Newton spectrum and mirror symmetry}

\label{sec:MirrorSymmetry}

Let $f$ be a convenient and nondegenerate polynomial defined on $\cit^n$ with Newton polytope $P$
and let $\mathcal{X}$ be the Deligne-Mumford stack associated with the stacky fan $\mathbf{\Sigma} =(\zit^n, \Sigma ,\mathcal{V} (P))$ of $P$ as in Section \ref{sec:NewtonAndOrbifold}. As suggested by Corollary \ref{coro:ToricSpectraOrbifoldCohomology}, we 
may expect an isomorphism of $\qit$-graded {\em rings}
\begin{equation}\label{eq:MirrorIso}
 H_{\orb}^{2*}(\mathcal{X}, \qit )\longrightarrow \gr^{\mathcal{N}} (\frac{\mathcal{B}}{\mathcal{L}})
\end{equation}
where as above $\mathcal{L}$ is the ideal generated by the partial derivative 
$u_1 \frac{\partial f}{\partial u_1 }, \cdots , u_n \frac{\partial f}{\partial u_n }$ of $f$.
The product on $\gr (\frac{\mathcal{B}}{\mathcal{L}})$ is in principle easy to compute and should give, with the help of such an isomorphism, a concrete description of
the orbifold product on $H_{\orb}^{2*}(\mathcal{X}, \qit )$. On the other hand, Poincar\'e duality on $H_{\orb}^{2*}(\mathcal{X}, \qit )$ (see for instance \cite[Proposition 4.11]{ALR}) should be helpful in order to understand the pairing on $\gr (\frac{\mathcal{B}}{\mathcal{L}})$ and possible symmetries of the toric Newton spectrum. 
 Because the map (\ref{eq:partial}) is strict with respect to the Newton filtration, the graded ring $\gr (\frac{\mathcal{B}}{\mathcal{L}})$ looks like the "Stanley-Reisner presentation" of $\mathcal{X}$ given by the right hand side of \cite[Theorem 1.1]{BCS}: nevertheless, we can't use directly this result because $X_{\Sigma}$ is not complete.
Notice that if 
$f(u)=\sum_{b\in \mathcal{V}(P)} u^b$ is a convenient and nondegenerate {\em Laurent} polynomial then $X_{\Sigma}$ is complete and the map (\ref{eq:MirrorIso}) is an isomorphism of graded rings by {\em loc. cit.}

\begin{example}

Let us return to the example $f(u,v)=u^2 +u^2 v^2 +v^2$. A basis of $\gr^{\mathcal{N}}\frac{\mathcal{B}}{\mathcal{L}(f)}$ is given by $1, uv , u^2 v^2, u^3 v^3, u, v ,u^2 v ,uv^2 $, with respective grading $0, \frac{1}{2}, 1, \frac{3}{2}, \frac{1}{2} ,\frac{1}{2}, 1 ,1$,
and we get the following table for the product $\bullet$ in $\gr^{\mathcal{N}}\frac{\mathcal{B}}{\mathcal{L}(f)}$:

\begin{center}

\begin{tabular}{|c|c|c|c|c|c|c|c|c|} \hline\hline
$ \bullet$       &    $1$        &  $uv$ &  $u^2 v^2$ & $u^3 v^3$ &  $u$ &  $v $ & $u^2 v$ &   $uv^2 $   \\ \hline\hline
$1$              &     $1$        &  $uv$ &  $u^2 v^2$ & $u^3 v^3$ &  $u$ &  $v $ & $u^2 v$ &   $uv^2 $ \\ \hline
$uv$          &   $uv$ & $u^2 v^2$ &  $u^3 v^3$ & $0$ & $u^2v$ & $uv^2$ & $0$ &     $0$       \\ \hline

$u^2 v^2$      &   $u^2 v^2$    & $u^3 v^3$      & $0$          & $0$ &  $0$        &   $0$     &     $0$         &      $0$           \\ \hline
$u^3 v^3$      &   $u^3 v^3$        & $0$      & $0$          & $0$      & $0$ &     $0$         &     $0$             &           $0$               \\ \hline

$u$          &   $u$    & $u^2 v$      &      $0$      & $0$      & $-u^2 v^2 $     &     $0$         &        $-u^3v ^3$             &    $0$                     \\ \hline

$v$          &   $v$    & $uv^2$      & $0$        & $0$      & $0$     &    $-u^2 v^2$           &        $0$          &       $- u^3v ^3 $           \\ \hline

$u^2 v$   &   $u^2 v$     & $0$     & $0$          & $0$      & $-u^3v ^3 $      &    $0$          &        $0$          &           $0$              \\ \hline

$uv^2$   &   $uv^2$     & $0$       & $0$          & $0$      & $0$     &     $-u^3v ^3 $         &       $0$           &             $0$             \\ \hline

\end{tabular}

\end{center}

\noindent On the orbifold cohomology side, we have the sector decomposition $I_{\mathcal{X}}=\coprod_{v\in\Box (P)}\mathcal{X}_v =:\coprod_{i=0,\cdots 5}\mathcal{X}_{v_i}$ of the inertia orbifold of $\mathcal{X}$. A basis of the orbifold cohomology of $\mathcal{X}$ is given by 
$$1_{v_0}, 1_{v_0}p,1_{v_1},1_{v_2}, 1_{v_3}, 1_{v_3}p, 1_{v_4},1_{v_5}$$
 with 
respective grading $0, 1, \frac{1}{2}, 1, \frac{1}{2}, \frac{3}{2} , 1, \frac{1}{2}$.
Let us define
the isomorphism
$$H_{\orb}^{2*}(\mathcal{X}, \qit )\longrightarrow \gr (\frac{\mathcal{B}}{\mathcal{L}})$$
by the assignments 
$$1_{v_0} \mapsto 1, 1_{v_1} \mapsto u , 1_{v_2} \mapsto u^2 v, 1_{v_3} \mapsto uv, 1_{v_4} \mapsto uv^2 , 1_{v_5} \mapsto v, 1_{v_0} p \mapsto u^2 v^2 , 1_{v_3} p \mapsto u^3 v^3.$$
We would get the following table for an "orbifold" cup-product in $H_{\orb}^{2*}(\mathcal{X}, \qit )$:

\begin{center}

\begin{tabular}{|c|c|c|c|c|c|c|c|c|} \hline\hline
$ \cup_{\orb}$       &    $1_{v_0}$  &  $1_{v_1} $ &  $1_{v_2}$ & $1_{v_3}$ &  $1_{v_4}$ &  $1_{v_5}$ & $1_{v_0}p$ &   $1_{v_3}p$   \\ \hline\hline
$1_{v_0}$              &     $1_{v_0}$  &  $1_{v_1}$  & $1_{v_2}$ & $1_{v_3}$ & $1_{v_4}$  &   $1_{v_5}$  &  $1_{v_0}p$ &   $1_{v_3}p$   \\ \hline
$1_{v_1}$              &     $1_{v_1}$  &  $-1_{v_0}p$  & $-1_{v_3}p$ & $1_{v_2}$ & $0$  &   $0$  &  $0$ &   $0$   \\ \hline
$1_{v_2}$              &     $1_{v_2}$  &  $-1_{v_3}p$  & $0$ & $0$ & $0$  &   $0$  &  $0$ &   $0$   \\ \hline
$1_{v_3}$              &     $1_{v_3}$  &  $1_{v_2}$  & $0$ & $1_{v_0}p$ & $0$  &   $1_{v_4}$  &  $1_{v_3}p$ &   $0$   \\ \hline
$1_{v_4}$              &     $1_{v_4}$  &  $0$  & $0$ & $0$ & $0$  &   $1_{v_3}p$  &  $0$ &   $0$   \\ \hline
$1_{v_5}$              &     $1_{v_5}$  &  $0$  & $0$ & $1_{v_4}$ & $1_{v_3}p$  &   $-1_{v_0}p$  &  $0$ &   $0$   \\ \hline
$1_{v_0}p$                       &     $1_{v_0}p$  &  $0$  & $0$ & $1_{v_3}p$ & $0$  &   $0$  &  $0$ &   $0$   \\ \hline
$1_{v_3}p$            &     $1_{v_3}p$  &  $0$  & $0$ & $0$ & $0$  &   $0$  &  $0$ &   $0$   \\ \hline
\end{tabular}

\end{center}

\end{example}

\subsection{Toric Newton spectrum and Ehrhart theory}

\label{sec:Combinatoire}

 By Theorem \ref{theo:ToricSpectrumSeries}, the results of \cite[Section 4]{D12} still hold, 
with the same proofs, for the toric Newton spectrum of the Newton polytope of a convenient and nondegenerate polynomials. In words,
the toric Newton spectrum of a polynomial 
counts weighted lattice points in its Newton polytope $P$: define, for a nonnegative integer $\ell$, $L_P (\ell ):= \card ( (\ell P )\cap M)$. Then $L_P$ is a polynomial in $\ell$ of degree $n$ (the {\em Ehrhart polynomial} of $P$) and we have
\begin{equation}\label{eq:serie Ehrhart}
1+\sum_{m\geq 1}L_P (m) z^m =\frac{\delta_0 +\delta_1 z +\cdots +\delta_n z^n}{(1-z)^{n+1}}
\end{equation}
where the $\delta_j$'s are nonnegative integers. 
We will write $\delta_P (z) :=\delta_0 +\delta_1 z +\cdots +\delta_n z^n$: this is the $\delta$-vector of $P$.

\begin{theorem}\label{coro:SpecEgalDelta}\cite[Section 4.3]{D12}
Let $f$ be a convenient and nondegenerate polynomial on $\cit^n$ and let $P$ be its Newton polytope.
Let $\delta_P (z) :=\sum_{k=0}^n \delta_k z^k $ be the $\delta$-vector of the $P$ and let 
$P_{\gr^{\mathcal{N}}(\frac{\mathcal{B}}{\mathcal{L}})}(z)=\sum_{i=1}^{\mu_P} z^{\beta_i}$ be the toric Newton spectrum of $f$. Then, for $k=0,\cdots ,n$, the coefficient $\delta_k$ is equal to the number of $\beta_i$'s such that $\beta_i\in ]k-1,k]$.
\end{theorem}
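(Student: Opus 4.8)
The plan is to connect the two generating functions through the substitution that turns the Newton-function sum into the Ehrhart series. First I would start from Theorem \ref{theo:ToricSpectrumSeries}, which gives
$P_{\gr^{\mathcal{N}}(\frac{\mathcal{B}}{\mathcal{L}})}(z)=(1-z)^n\sum_{v\in\nit^n}z^{\nu(v)}$,
and from Remark \ref{rem:MufMuP}, which guarantees that this is a genuine polynomial $\sum_{i=1}^{\mu_P}z^{\beta_i}$ with nonnegative rational exponents. The key combinatorial identity I want is a cone decomposition of the series $\sum_{v\in\nit^n}z^{\nu(v)}$: since $\nit^n$ is the disjoint union over facet-cones (more precisely over the cones $C$ spanned by faces of the Newton boundary, using a fixed triangulation if necessary) of the lattice points strictly inside each cone, and since $\nu$ is \emph{linear} on each such cone, I would relate $\sum_{v}z^{\nu(v)}$ to the Ehrhart series of $P$ itself. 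Concretely, a lattice point $v$ with $\nu(v)\in]k-1,k]$ sits in the "shell" $kP\setminus (k-1)P$ up to boundary adjustments, so one expects $\sum_{v\in\nit^n, \nu(v)\le m}1 = L_P(m)$ for integers $m$, because $\{v\in\rit^n_+ : \nu(v)\le m\}$ is exactly $mP$.

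That last observation is really the heart of it: by definition of $\nu$ via the facet presentations \eqref{eq:PresentationFacet}, one has $\nu(v)\le 1 \iff v\in P$ for $v\in\rit^n_+$, hence $\nu(v)\le m \iff v\in mP$, so $\card\{v\in\nit^n:\nu(v)\le m\}=L_P(m)$ for every integer $m\ge 0$ (with $L_P(0)=1$ accounting for $v=0$, $\nu(0)=0$). Grouping the sum $\sum_{v\in\nit^n}z^{\nu(v)}$ by the value of $\lceil\nu(v)\rceil$ and writing $a_k:=\card\{v\in\nit^n:\nu(v)\in]k-1,k]\}$, one gets $\card\{v:\nu(v)\le m\}=\sum_{k=0}^m a_k = L_P(m)$. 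Then I would feed this into \eqref{eq:serie Ehrhart}: since $1+\sum_{m\ge 1}L_P(m)z^m=\sum_{m\ge 0}\big(\sum_{k\le m}a_k\big)z^m=\frac{1}{1-z}\sum_{k\ge0}a_k z^k$, comparing with $\frac{\delta_P(z)}{(1-z)^{n+1}}$ gives $\sum_k a_k z^k=\frac{\delta_P(z)}{(1-z)^n}=\delta_P(z)\sum_{j\ge0}\binom{n-1+j}{j}z^j$. Meanwhile Theorem \ref{theo:ToricSpectrumSeries} says $P_{\gr^{\mathcal{N}}(\frac{\mathcal{B}}{\mathcal{L}})}(z)=(1-z)^n\sum_{v\in\nit^n}z^{\nu(v)}$; the number of $\beta_i$ lying in $]k-1,k]$ is the "integer-rounding" statistic of this polynomial, and I would show it equals $a_k$ reduced by the same $(1-z)^n$ twist — i.e. that applying $(1-z)^n$ to a $\qit_{\ge0}$-supported series and then counting exponents in each half-open unit interval is the inverse operation to multiplying by $\sum_j\binom{n-1+j}{j}z^j$, which recovers exactly $\delta_P(z)$.

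To make that last comparison rigorous I would introduce the "rounding" functional $R$ sending a polynomial $\sum_i z^{\beta_i}$ with $\beta_i\ge0$ to the integer polynomial $\sum_k (\#\{i:\beta_i\in]k-1,k]\})z^k$, and check the single algebraic fact that $R\big((1-z)^n S(z)\big)$, for $S(z)=\sum_{v\in\nit^n}z^{\nu(v)}$, equals $\delta_P(z)$; equivalently, that $R$ intertwines multiplication by $(1-z)^{-n}$ on the group-ring level with the passage $\delta_P(z)\mapsto \sum_k a_k z^k$. This is a routine but slightly fiddly manipulation of half-open interval partitions of $\qit_{\ge0}$ — the point being that multiplication by $(1-z)^{-n}=\sum_j\binom{n-1+j}{j}z^j$ shifts exponents by nonnegative integers and so does not move them across the integers in a way that disturbs the $]k-1,k]$-count beyond the predicted binomial spreading. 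The main obstacle, I expect, is precisely pinning down this interplay between the $\qit$-grading with its half-open-interval bookkeeping and the purely integer Ehrhart $\delta$-vector: one must be careful about the endpoints (values exactly at integers) and about whether $\nu(v)$ can be rational non-integer, which is exactly why the statement uses the half-open intervals $]k-1,k]$. Everything else is a direct citation of \cite[Section 4.3]{D12} via Theorem \ref{theo:ToricSpectrumSeries}, as the theorem statement already signals.
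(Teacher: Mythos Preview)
Your proposal is correct and follows exactly the route the paper indicates: the paper gives no self-contained proof here but simply invokes \cite[Section 4.3]{D12} together with Theorem~\ref{theo:ToricSpectrumSeries}, and your argument is precisely the computation that lies behind that citation---identify $\{v\in\nit^n:\nu(v)\le m\}$ with $mP\cap\nit^n$, read off $\sum_k a_k z^k=\delta_P(z)/(1-z)^n$, and then transport through the factor $(1-z)^n$.

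One remark to sharpen the last step, which you flag as ``routine but fiddly'': rather than phrasing $R$ as a counting functional, extend $z^{\alpha}\mapsto z^{\lceil\alpha\rceil}$ \emph{linearly} to the whole $\qit$-graded group ring (allowing arbitrary integer coefficients). Since $\lceil\alpha+j\rceil=\lceil\alpha\rceil+j$ for every integer $j$, this linear operator commutes with multiplication by any integer-exponent Laurent polynomial, in particular by $(1-z)^n$. Applying it to both sides of Theorem~\ref{theo:ToricSpectrumSeries} gives immediately $\sum_k b_k z^k=(1-z)^n\sum_k a_k z^k=\delta_P(z)$, with no endpoint bookkeeping needed; Remark~\ref{rem:MufMuP} guarantees the left-hand side really is the nonnegative count you want.
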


\begin{example} Theorem \ref{coro:SpecEgalDelta} gives a recipe in order to calculate Ehrhart polynomials of Newton polytopes of polynomials. In order to emphasize this, we give two very simple examples:
\begin{enumerate}
\item Let $f(u_1 , \cdots ,u_n )=u_1 +\cdots  +u_n$ on $\cit^n$. 
Its Newton polytope is
$$P :=\conv ((0,\cdots ,0), (1,0,\cdots ,0), (0,1,0,\cdots ,0), (0,\cdots ,0,1)),$$
the standard simplex in $\rit^n$. 
The toric Newton spectrum of $f$ is $z^0$ and it follows that the $\delta$-vector 
of $P$ is given by $\delta_0 =1$, $\delta_2 =\cdots =\delta_n =0$: its Ehrhart polynomial is 
$L_P (z) =\binom{z+n}{n}$.
\item Let $f(u_1 , u_2 , u_3 )=u_1 +u_2  +u_3^c$ where $c$ is a positive integer.  
Its Newton polytope is $P :=\conv ((0,0,0), (1,0,0), (0,1,0), (0,0,c))$. 
The toric Newton spectrum of $f$ is $\sum_{i=0}^{c-1} z^{i/c}$. Therefore, the $\delta$-vector 
of $P$ is given by $\delta_0 =1$, $\delta_1 =c-1$ and $\delta_2 =\delta_3 =0$: its Ehrhart polynomial is 
$L_P (z) =\binom{z+3}{3}+(c-1)\binom{z+2}{3}$.
\end{enumerate}
\end{example}

\subsection{The local case}

\label{sec:LocalCase}

Analogous results hold true in the local case:
let now $f$ be a power series with an isolated critical point at the origin, and let $\mu_0 :=\dim_{\cit} \cit \{u_1 ,\cdots ,u_n \}/(\partial_{u_1}f ,\cdots , \partial_{u_n}f)$ be its Milnor number at 
the origin.
Let $\Gamma_+ (f)$ be the convex hull of 
$\cup_{a\in\ supp(f)-0}(a+\rit_+^{n})$ in $\rit_+^n$, let $\Gamma (f)$ be the union of the compact faces of $\Gamma_+ (f)$ and let $P$ be the union of all the segments starting from the origin and ending on $\Gamma (f)$: this is the {\em Newton polyhedron} of $f$. We will put $\mu_P :=n! \vol (P)$ where
the volume $\vol (P)$ of the Newton polyhedron $P$ is normalized such that the volume of the cube is equal to one.

A Newton  function $\nu_0 $ is defined as in Section \ref{sec:NewtonForms} putting $\nu_0 (a) =\min_{F} <u_F ,a>$ for $a\in\nit^n$, see \cite[Section 2.1]{K}. It now satisfies 
$\nu_0 (a+b) \geq \nu_0 (a) +\nu_0 (b)$ for $a,b\in\nit^n$, with equality if $a$ and $b$ belong to the same cone. This provides a decreasing Newton filtration $\mathcal{N}_0$ on $\mathcal{A}:=\cit \{u_1 ,\cdots ,u_n \}$, a local toric Newton spectrum
$P_{\gr_{\mathcal{N}_0}(\frac{\mathcal{A}}{\mathcal{L}})}(z)$, 
where $\mathcal{L}$ still denotes the ideal generated by the partial derivative 
$u_1 \frac{\partial f}{\partial u_1 }, \cdots , u_n \frac{\partial f}{\partial u_n }$ of $f$,
and a local Newton spectrum $\Spec_f^0 (z)$, which is equal to the calssical singularity spectrum. 
The previous results still hold with minor modifications.
In particular, let $\Sigma$ be the fan built over the faces of the Newton boundary of $P$ (the union of the closed faces of $P$ that do not contain the origin), and let $X_{\Sigma}$ be the toric variety associated with the fan $\Sigma$. 
Assume that $X_{\Sigma}$ is simplicial and let $\mathcal{X}$ be the Deligne-Mumford stack
associated by \cite{BCS} with the stacky fan $\mathbf{\Sigma}=(\zit^n, \Sigma , \mathcal{V} (P))$ 
where the $\mathcal{V} (P)$ is the set of the vertices of $P$ different from the origin.

\begin{theorem}
Let $f\in\mathcal{A}$ be a convenient and nondegenerate power series. 
 Assume that the fan $\Sigma$ of its Newton polyhedron is simplicial. Then $P_{\gr_{\mathcal{N}_0}(\frac{\mathcal{A}}{\mathcal{L}})}(z)=\sum_{\alpha\in\qit} \dim_{\qit} H^{2\alpha }_{\orb} (\mathcal{X}, \qit ) z^{\alpha}$.\qed
\end{theorem}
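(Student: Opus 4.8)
The plan is to transpose, \emph{mutatis mutandis}, the arguments running from Theorem~\ref{theo:ToricSpectrumSeries} through Corollary~\ref{coro:ToricSpectraOrbifoldCohomology} to the local setting, replacing the increasing Newton filtration of $\mathcal{B}$ by the decreasing filtration $\mathcal{N}_0$ of $\mathcal{A}=\cit\{u_1, \cdots, u_n\}$, the Newton function $\nu$ by $\nu_0$, and the statements of \cite{K} by their local counterparts, which are proved in the same reference. Write $B:=\gr_{\mathcal{N}_0}\mathcal{A}$: this is again the $\nit^n$-graded ring spanned by the classes $\delta_m$ of the monomials $u^m$ with the cone-additive product~(\ref{eq:ProduitB}), now graded by $\nu_0$, so that $P_B(z)=\sum_{v\in\nit^n}z^{\nu_0(v)}$. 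First I would establish the local form of Theorem~\ref{theo:ToricSpectrumSeries}: letting $F_i$ be the class of $u_i\frac{\partial f}{\partial u_i}$ in $B_1$ and $F$ the ideal it generates, nondegeneracy of $f$ together with the local case of \cite[Th\'eor\`eme 2.8]{K} makes the Koszul complex of $F_1, \cdots, F_n$ in $B$ exact, hence, by the local case of \cite[Th\'eor\`eme 4.1]{K}, the map $\partial\colon\mathcal{A}^n\to\mathcal{A}$ sending $(b_i)_i$ to $\sum_i b_i u_i\frac{\partial f}{\partial u_i}$ is strict for $\mathcal{N}_0$. Therefore $\gr_{\mathcal{N}_0}(\frac{\mathcal{A}}{\mathcal{L}})\cong B/F$ and
$$P_{\gr_{\mathcal{N}_0}(\frac{\mathcal{A}}{\mathcal{L}})}(z)=(1-z)^nP_B(z)=(1-z)^n\sum_{v\in\nit^n}z^{\nu_0(v)};$$
although $\sum_v z^{\nu_0(v)}$ is an infinite series, the product is a finite sum $\sum_{i=1}^{\mu_P}z^{\beta_i}$ with the $\beta_i$ nonnegative rationals, because $\dim_\cit\mathcal{A}/\mathcal{L}=\mu_P<\infty$ by \cite{K}.

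Next I would prove the local analogues of Proposition~\ref{prop:ToricSpectrumBox} and Theorem~\ref{theo:SpectrumToricVariety}. Convenience of $f$ and the local case of \cite[Proposition 2.6]{K} supply an exact sequence of graded $B$-modules $0\to B\to C_{n-1}\to\cdots\to C_0\to 0$ with $C_q=\oplus_{\dim\Delta=q}B_\Delta$ for $\Delta\in\mathcal{F}(P)$; and, since $\nu_0$ is \emph{additive} inside each cone, the identity $(1-z)^{q+1}P_{B_\Delta}(z)=\sum_{v\in\Box(\Delta)\cap\nit^n}z^{\nu_0(v)}$ still holds when $\dim\Delta=q$ and the faces of $\mathcal{F}(P)$ are simplices, which is guaranteed by $\Sigma$ being simplicial. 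Combining these and reindexing by $v$ exactly as in Theorem~\ref{theo:SpectrumToricVariety} gives
$$P_{\gr_{\mathcal{N}_0}(\frac{\mathcal{A}}{\mathcal{L}})}(z)=\sum_{v\in\Box(P)\cap\nit^n}E_v^*(z)\,z^{\nu_0(v)}.$$

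It then remains to repeat the proof of Corollary~\ref{coro:ToricSpectraOrbifoldCohomology}. The fan $\Sigma$, the toric variety $X_\Sigma$ and the stack $\mathcal{X}$ attached to $\mathbf{\Sigma}=(\zit^n,\Sigma,\mathcal{V}(P))$ are built from the Newton boundary of $P$ exactly as in the polynomial case, so Proposition~\ref{prop:DualityHodgeDeligne} and its proof apply unchanged and, applied to the simplicial quotient fan $\Sigma/\sigma(v)$, give $E_v^*(z)=\sum_k\dim_\qit H^{2k}(X_{\Sigma/\sigma(v)},\qit)\,z^k$. Since every vertex $b_i\in\mathcal{V}(P)$ lies on the Newton boundary, $\nu_0(b_i)=1$, so for $v=\sum_\ell q_\ell b_{i_\ell}\in\Box(\Delta)$ one has $\nu_0(v)=\sum_\ell q_\ell$, which is precisely the degree-shifting number of the box element $v$ in \cite{BCS}; hence $H^{2\alpha}_{\orb}(\mathcal{X},\qit)=\oplus_{v\in\Box(P)\cap\nit^n}H^{2(\alpha-\nu_0(v))}(X_{\Sigma/\sigma(v)},\qit)$ by \cite[Proposition 4.7]{BCS}. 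Substituting this into the previous display finishes the proof.

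The difficulty here is bookkeeping rather than ideas: one must make sure that passing from an increasing to a decreasing Newton filtration leaves the exactness/strictness mechanism intact --- i.e. that \cite[Th\'eor\`eme 2.8, Proposition 2.6, Th\'eor\`eme 4.1]{K} are invoked in their \emph{local} form and that the finiteness $\mu_P<\infty$ is what turns $P_{\gr_{\mathcal{N}_0}(\frac{\mathcal{A}}{\mathcal{L}})}(z)$ into a genuine finite spectrum --- and that $\nu_0(v)$ indeed coincides with the degree-shifting number of \cite{BCS} under its sign conventions. Everything else is a line-by-line transcription of the proofs of Theorem~\ref{theo:ToricSpectrumSeries}, Proposition~\ref{prop:ToricSpectrumBox} and Corollary~\ref{coro:ToricSpectraOrbifoldCohomology}.
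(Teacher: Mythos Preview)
Your proposal is correct and is precisely the argument the paper has in mind: the theorem is stated with an immediate \qed, and the surrounding text says ``the previous results still hold with minor modifications,'' so the intended proof is exactly the line-by-line transcription of Theorem~\ref{theo:ToricSpectrumSeries}, Proposition~\ref{prop:ToricSpectrumBox}, Theorem~\ref{theo:SpectrumToricVariety} and Corollary~\ref{coro:ToricSpectraOrbifoldCohomology} to the local setting that you outline. Your explicit identification of which local statements from \cite{K} are needed, and your check that $\nu_0$ agrees with the degree-shifting number in \cite{BCS}, fill in the bookkeeping the paper leaves to the reader.
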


Although $P$ is not a polytope in general, we can count lattice points in it. Ehrhart polynomials and $\delta$-vectors are still defined (see \cite{BMc} for instance) and we have:

 \begin{theorem}\label{coro:SpecEgalDeltaLocal}
Let $f$ be a convenient and nondegenerate power series on $\cit^n$ and let $P$ be its Newton polyhedron (assumed to be simplicial).
Let $\delta_P (z) :=\sum_{k=0}^n \delta_k z^k $ be the $\delta$-vector of the $P$ and let 
$P_{\gr_{\mathcal{N}_0}(\frac{\mathcal{A}}{\mathcal{L}})}(z)=\sum_{i=1}^{\mu_P} z^{\beta_i}$ be the local toric Newton spectrum of $f$. Then, for $k=0,\cdots ,n$, the coefficient $\delta_k$ is equal to the number of $\beta_i$'s such that $\beta_i\in ]k-1,k]$.
\end{theorem}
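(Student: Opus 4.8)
The plan is to follow the exact route established in the polytope case (Theorem \ref{coro:SpecEgalDelta}, i.e. \cite[Section 4.3]{D12}), transporting each step to the Newton polyhedron setting, and the key enabling fact is the local analogue of Theorem \ref{theo:ToricSpectrumSeries}. First I would record the local version of the fundamental formula: exactly as in the proof of Theorem \ref{theo:ToricSpectrumSeries}, the strictness of the map $\partial:\mathcal{A}^n\to\mathcal{A}$ with respect to the decreasing Newton filtration $\mathcal{N}_0$ (which holds by \cite[Th\'eor\`eme 4.1]{K} since $f$ is convenient and nondegenerate) together with the Koszul exactness from \cite[Th\'eor\`eme 2.8]{K} yields
\begin{equation}\nonumber
P_{\gr_{\mathcal{N}_0}(\frac{\mathcal{A}}{\mathcal{L}})}(z)=(1-z)^n\sum_{v\in\nit^n}z^{\nu_0(v)}.
\end{equation}
This is the only place where the analytic/local nature of $f$ intervenes, and it shows in particular that the local toric Newton spectrum depends only on the Newton polyhedron $P$.

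Next I would connect the right-hand side to Ehrhart data of $P$. Since $P$ is a lattice \emph{polyhedron} which is a finite union of segments from the origin to the compact faces of $\Gamma_+(f)$, it is still true (cf. \cite{BMc}, and the simpliciality assumption makes everything cone-by-cone) that the generating function $1+\sum_{m\geq 1}L_P(m)z^m$ equals $\delta_P(z)/(1-z)^{n+1}$ with $\delta_P(z)=\sum_{k=0}^n\delta_k z^k$. The bridge between $\sum_{v\in\nit^n}z^{\nu_0(v)}$ and $\sum_m L_P(m)z^m$ is the observation that for a lattice point $v\in\nit^n$ one has $v\in mP$ precisely when $\nu_0(v)\geq $ the appropriate linear threshold; concretely, counting lattice points in dilates of $P$ cone-by-cone and comparing with the grading by $\nu_0$ gives, after multiplying by $(1-z)^n$, an identity relating $P_{\gr_{\mathcal{N}_0}(\frac{\mathcal{A}}{\mathcal{L}})}(z)$ to $(1-z)^{n+1}\bigl(1+\sum_{m\geq 1}L_P(m)z^m\bigr)=\delta_P(z)$ up to the standard bookkeeping of half-open cones. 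This is exactly the computation carried out in \cite[Section 4]{D12} for polytopes; I would simply note that the proof there uses only the formula $(1-z)^n\sum_{v}z^{\nu(v)}$ and the cone decomposition, both of which are available here.

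Finally I would extract the precise statement: writing $P_{\gr_{\mathcal{N}_0}(\frac{\mathcal{A}}{\mathcal{L}})}(z)=\sum_{i=1}^{\mu_P}z^{\beta_i}$ (the $\beta_i$ are finitely many nonnegative rationals, and $\mu_P=\dim_{\cit}\mathcal{A}/\mathcal{L}$ by \cite{K}), the correspondence above shows that grouping the $\beta_i$ according to the half-open intervals $]k-1,k]$ for $k=0,\dots,n$ reproduces the coefficients $\delta_k$ of $\delta_P(z)$; the $\beta_i$ lie in $[0,n]$ because each $v\in\Box(\Delta)\cap\nit^n$ for a $q$-dimensional face $\Delta$ contributes $\nu_0(v)<q+1\leq n$, exactly as in Corollary \ref{coro:ToricSpectrum01}(1). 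The main obstacle, and the one point I would want to check carefully rather than wave at, is the sign and direction conventions: in the local case $\nu_0$ is \emph{super}additive and the filtration $\mathcal{N}_0$ is \emph{decreasing}, so the roles of $\nu$ and the Ehrhart threshold are reversed compared to the global case, and one must verify that the half-open-interval assignment $]k-1,k]$ (rather than $[k,k+1[$) is the correct one after this reversal; once the indexing is matched, the argument of \cite[Section 4.3]{D12} applies verbatim.
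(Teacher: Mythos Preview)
Your proposal is correct and follows exactly the route the paper takes: the paper gives no explicit proof of this theorem, simply asserting in Section~\ref{sec:LocalCase} that ``the previous results still hold with minor modifications,'' and you have spelled out precisely those modifications (the local analogue of Theorem~\ref{theo:ToricSpectrumSeries} via \cite{K}, then the argument of \cite[Section~4.3]{D12}). Your caution about a possible reversal of conventions is prudent but in the end unnecessary: although $\mathcal{N}_0$ is decreasing and $\nu_0$ superadditive, one still has $P=\{v\in\rit_+^n:\nu_0(v)\le 1\}$ and hence $v\in mP\Leftrightarrow\nu_0(v)\le m$, so the Ehrhart bookkeeping and the half-open interval $]k-1,k]$ go through verbatim from the global case.
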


\begin{example} 
Let us consider the classical example $f(x,y)= x^5 + x^2 y^2 + y^5$ on $\cit^2$. Its Milnor number at the origin is $\mu_0 =11$ and its 
toric Milnor number is $\mu_P =20$. Its local toric Newton spectrum (at the origin) 
is
$$P_{\gr_{\mathcal{N}_0}(\frac{\mathcal{A}}{\mathcal{L}})}(z)=1+2z^{1/5}+2z^{2/5} +2z^{3/5}+ 2z^{4/5}+
z^{1/2}+z+z^{3/2} +2z^{7/10} +2 z^{9/10} +2 z^{11/10} +2z^{13/10}.$$  
 Therefore, the $\delta$-vector 
of $P$ is given by $\delta_P (z) =1+14z +5 z^2$ and its Ehrhart polynomial is 
$L_P (z) =\binom{z+2}{2}+14\binom{z+1}{2}+5\binom{z}{2}$. Notice that its local Newton spectrum is
$$\Spec_f^0 (z)=z^{1/2}+z+z^{3/2} +2z^{7/10} +2 z^{9/10} +2 z^{11/10} +2z^{13/10}$$
and this formula can be deduced from the description of $P_{\gr_{\mathcal{N}_0}(\frac{\mathcal{A}}{\mathcal{L}})}(z)$.
\end{example}

\end{document}